\definecolor{webgreen}{rgb}{0,.5,0}
\definecolor{webbrown}{rgb}{.6,0,0}
\definecolor{RoyalBlue}{cmyk}{1, 0.50, 0, 0}
\newcommand{\T}		{\mathbb{T}}
\newcommand{\D}		{\mathbb{D}}
\newcommand{\R}		{\mathbb{R}}
\newcommand{\C}		{\mathbb{C}}
\newcommand{\N}		{\mathbb{N}}
\newcommand{\Z}		{\mathbb{Z}}
\newcommand{\supp}{\mathrm{supp}}
\renewcommand{\det}{\mathrm{det}}
\newcommand{\qasq}{\quad \text{as} \quad}
\newcommand{\qandq}{\quad \text{and} \quad}
\newcommand{\dd}{\mathrm{d}}
\newcommand{\ic}{\mathrm{i}}
\newcommand{\am}{\mathfrak{a}}
\newcommand{\z}{{\boldsymbol z}}
\newcommand{\s}{{\boldsymbol s}}
\newcommand{\e}{{\boldsymbol e}}
\newcommand{\x}{{\boldsymbol x}}
\newcommand{\RS}{\boldsymbol{\mathfrak S}}
\newcommand{\bd}{\boldsymbol{\Delta}}
\newcommand{\ualpha}{\boldsymbol\alpha}
\newcommand{\ubeta}{\boldsymbol\beta}
\newtheorem{theorem}{Theorem}[section]
\newtheorem{proposition}{Proposition}[section]
\numberwithin{equation}{section}
\begin{document}

\title{On \( L_\R^2\)-best rational approximants to Markov functions on several intervals}

\author{Maxim L. Yattselev}

\address{Department of Mathematical Sciences, Indiana University-Purdue University Indianapolis, 402~North Blackford Street, Indianapolis, IN 46202}

\email{\href{mailto:maxyatts@iupui.edu}{maxyatts@iupui.edu}}

\thanks{The research was supported in part by a grant from the Simons Foundation, CGM-706591.}

\subjclass[2000]{42C05, 41A20, 41A21}

\keywords{\(L^2\)-best rational approximation, strong asymptotics}

\maketitle

\begin{abstract}
Let \( f(z)=\int(z-x)^{-1}\dd\mu(x) \), where \( \mu \) is a Borel measure supported on several subintervals of \( (-1,1) \) with smooth Radon-Nikodym derivative. We study strong asymptotic behavior of the error of approximation \( (f-r_n)(z) \), where \( r_n(z) \) is the \( L_\R^2\)-best rational approximant to \( f(z) \) on the unit circle with \( n \) poles inside the unit disk. 
\end{abstract}

\section{\(L_\R^2\)-best Rational Approximants}

\subsection{Meromorphic Approximation Problem}
Let \( \D \) be the unit disk and \( \T \) be the unit circle. Denote by $L^2$ the space of square-summable functions on $\T$ and by $H^2$ the Hardy space of functions \( f(z) \) that are holomorphic in \( \D \) and satisfy
\[
\displaystyle \|f\|_2^2 := \sup_{0<r<1}\frac{1}{2\pi}\int_{\T}|f(r\tau)|^2|\dd\tau|<\infty.
\]
Every function in $H^2$ is uniquely determined by its non-tangential limit on $\T$, which necessarily belongs to \( L^2 \) and the $L^2$-norm of this trace is equal to the $H^2$-norm of the function. Thus, $H^2$ can be regarded as a closed subspace of $L^2$. We shall further denote by \( L^2_\R \) and \( H^2_\R \) the subspaces of  \( L^2 \) and \( H^2 \), respectively, consisting of functions with real Fourier coefficients, that is, functions satisfying \( f(\bar\tau)= \overline{f(\tau)} \) for \( \tau\in\T \).

Denote by \( \mathcal P_n \) the space of algebraic polynomials of degree at most $n$ with real coefficients and by \( \mathcal Q_n \) its subset consisting of monic polynomials with exactly $n$ zeroes in $\D$. Put
\[
\mathcal R_n := \left\{\frac{p(z)}{q(z)}=\frac{p_{n-1}z^{n-1}+p_{n-2}z^{n-2}+\cdots+p_0}{z^n+q_{n-1}z^{n-1}+\cdots+q_0}:~ p\in\mathcal P_{n-1},~q\in\mathcal Q_n\right\}
\]
and consider the following meromorphic approximation problem:  \emph{given $f\in L_\R^2$ and $n\in\N$, find $g_n\in  H_\R^2+\mathcal R_n$ such that $\|f-g_n\|_2=\inf_{g\in H_\R^2+\mathcal R_n}\|f-g\|_2$.} 

\subsection{Rational Approximation Problem}
The above problem can be reduced to a rational approximation problem. To this end, denote by $\bar H_\R^2$ the orthogonal complement of $H_\R^2$ in $L_\R^2$, $L_\R^2=H_\R^2\oplus\bar H_\R^2$, with respect to the standard scalar product. From the viewpoint of analytic function theory, $\bar H_\R^2$ can be regarded as a space of traces of functions holomorphic in $\{|z|>1\}$, having real Fourier coefficients and vanishing at infinity, and whose square-means on the concentric circles centered at zero (this time with radii greater then 1) are uniformly bounded above. The orthogonal decomposition $L_\R^2=H_\R^2\oplus\bar H_\R^2$ yields that
\[
\|f-g_n\|_2^2 = \|f^+ - g_n^+\|_2^2 + \|f^--g_n^-\|_2^2, 
\]
where $f=f^++f^-$, $g_n = g_n^++ g_n^-$, and $f^+,g_n^+\in H_\R^2$, $f^-,g_n^-\in\bar H_\R^2$. One can see that in order for $g_n$ to be a best approximant it is necessary that $g_n^+=f^+$. Moreover, one can readily check that $g_n^-\in\mathcal R_n$. Thus, an equivalent rational approximation problem can be stated as follows: \emph{given $f\in\bar H_\R^2$ and $n\in\N$, find $r_n\in \mathcal R_n$ such that $\|f-r_n\|_2=\inf_{r\in \mathcal R_n}\|f-r\|_2$.}

\subsection{Irreducible Critical Points}
$L_\R^2$-best rational approximants are a part of a larger class of \emph{critical points} in rational $\bar H_\R^2$-approximation. From the computational viewpoint, critical points are as important as best approximants  since a numerical search is more likely to yield a locally best rather than a best approximant. For a fixed $f\in\bar H^2_\R$, critical points can be defined as follows. Set
\[
\begin{array}{rll}
\Sigma_{f,n}:\mathcal P_{n-1}\times\mathcal Q_n &\to& [0,\infty) \\
(p,q) &\mapsto& \|f-p/q\|_2^2.
\end{array}
\]
In other words, $\Sigma_{f,n}$ is the squared error of approximation of $f$ by $r=p/q$ in $\mathcal R_n$. The cross-product $\mathcal P_{n-1}\times\mathcal Q_n$ is topologically identified with an open subset of $\R^{2n}$ with coordinates $p_j$ and $q_k$, $j,k\in\{0,\ldots,n-1\}$. Then a pair of polynomials $(p_c,q_c)\in\mathcal P_{n-1}\times\mathcal Q_n$, identified with a vector in $\R^{2n}$, is said to be a {\it critical pair of order} $n$, if all the partial derivatives of $\Sigma_{f,n}$ do vanish at $(p_c,q_c)$. Respectively, a rational function $r_c\in\mathcal R_n$ is a {\it critical point of order} $n$ if it can be written as the ratio $r_c=p_c/q_c$ of a critical pair $(p_c,q_c)$ in $\mathcal P_{n-1}\times\mathcal Q_n$. A particular example of a critical point is a {\it locally best approximant}. That is, a rational function $r_l=p_l/q_l$ associated with a pair $(p_l,q_l)\in\mathcal P_{n-1}\times\mathcal Q_n$ such that $\Sigma_{f,n}(p_l,q_l)\leq\Sigma_{f,n}(p,q)$ for all pairs $(p,q)$ in some neighborhood of $(p_l,q_l)$ in $\mathcal P_{n-1}\times\mathcal Q_n$. We call a critical point of order $n$ \emph{irreducible} if it belongs to $\mathcal R_n\setminus\mathcal R_{n-1}$. Best approximants, as well as local minima, are always irreducible critical points unless $f\in\mathcal R_{n-1}$. In general, there may be other critical points, reducible or irreducible, which are  saddles or maxima.

One of the most crucial features of the critical points is the fact that they are ``maximal'' rational interpolants. More precisely, if $f\in\bar H^2_\R$ and $r_n(z)$ is an irreducible critical point of order $n$, then \emph{$r_n(z)$ interpolates $f(z)$ at the reflection ($z\mapsto1/\bar z$) of each pole of $r_n(z)$ with order twice the multiplicity of the pole} \cite{Lev69}, which is the maximal number of interpolation conditions (i.e., $2n$) that can be imposed in general on a rational function of type $(n-1,n)$.

\subsection{Multipoint Pad\'e Approximants}

The just described property of irreducible critical points can also be expressed by saying that they are multipoint Pad\'e approximants. The latter are defined as follows. Let $f(z)$ be a function holomorphic and vanishing at infinity (the second condition is there for convenience only as functions in $\bar H^2_\R$ vanish at infinity by definition) and \( D \) be an unbounded domain in which \( f(z) \) is analytic or to which it possesses an analytic continuation, again, denoted by \( f(z) \). Let $\{E_n\}_{n\in\N}$ be a triangular scheme of points in \( D \), i.e., each $E_n$ consists of $2n$ not necessarily distinct nor necessarily finite points. Further, let $v_n(z)$ be the monic polynomial with zeroes at the finite points of $E_n$ (multiplicity of a zero is equal to the number of its occurrences in \( E_n \)). The \emph{$n$-th diagonal Pad\'e approximant of $f(z)$ associated with $E_n$} is a rational function $(p_n/q_n)(z)$ such that $\deg p_n\leq n$, $\deg q_n\leq n$, and $q_n\not\equiv0$, while
\begin{equation}
\label{Rn}
R_n(z) := \frac{q_n(z)f(z)-p_n(z)}{v_n(z)}=\mathcal{O}\left(1/z^{n+1}\right) \qasq z\to\infty,
\end{equation}
and is analytic in \( D \). Multipoint Pad\'e approximant always exists since the conditions for $p_n(z)$ and $q_n(z)$ amount to solving a system of $2n+1$ homogeneous linear equations with $2n+2$ unknown coefficients, no solution of which can be such that $q_n\equiv0$ (thus, we may assume that $q_n(z)$ is monic). Observe that given \( q_n(z) \), \( p_n(z) \) is uniquely defined. Indeed, if \( p(z) \) and \( p_*(z) \) were to correspond to the same \( q_n(z) \), the expression \( (p - p_*)(z)/v_n(z) \) would vanish at infinity with order at least \( n+1 \) while being finite at every zero of \( v_n(z) \), which is clearly impossible. Moreover, if the pairs \( p(z),q(z) \) and \( p_*(z),q_*(z) \) are solutions, then so is any linear combination \( (c_1p + c_2p_*)(z),(c_1q + c_2q_*)(z) \). Therefore, the solution corresponding to the monic denominator of the smallest degree is unique. In what follows, we understand that \(  p_n(z), q_n(z), R_n(z) \) come from this unique solution and write \( [n/n;E_n]_f(z):=(p_n/q_n)(z) \).

As mentioned above, irreducible critical points $r_n(z)=(p_n/q_n)(z)$ turn out to be multipoint Pad\'e approximants for which $v_n(z):=\varkappa_n\tilde q_n^2(z)$, where $\tilde q_n(z):=z^nq_n(1/z)$ is the reciprocal polynomial of $q_n(z)$ and \( \varkappa_n \) is the reciprocal of the leading coefficient of \( \tilde q_n^2(z) \).

\section{Markov Functions on One Interval}

\subsection{Markov Functions and Szeg\H{o} Measures}
Assume that
\begin{equation}
\label{markov}
f(z) = \int\frac{\dd\mu(x)}{z-x}
\end{equation}
for some finite positive compactly supported Borel measure \( \mu \) on the real line. Such functions, known as Markov functions, are well suited for asymptotic analysis of the behavior of their multipoint Pad\'e approximants because the denominator polynomials \( q_n(z) \) turn out to be orthogonal polynomials. More precisely, it holds that
\begin{equation}
\label{ortho}
\int x^m q_n(x)\frac{\dd\mu(x)}{v_n(x)} = 0, \quad m\in\{0,\ldots,n-1\},
\end{equation}
and the error of approximation can be represented as
\[
f(z)-\frac{p_n(z)}{q_n(z)} = \frac{v_n(z)}{q_n^2(z)}\int\frac{q_n^2(x)}{v_n(x)}\frac{\dd\mu(x)}{z-x}.
\]

In this section we shall assume that \( \mu \) is a Szeg\H{o} measure, that is, \( \supp(\mu)=[a,b] \) and
\begin{equation}
\label{decomp}
\dd\mu(x) = \frac{\dot{\mu}(x)\dd x}{\pi\sqrt{(x-a)(b-x)}} +  \dd\mu_s(x), 
\end{equation}
where \( \mu_s \) is singular to Lebesgue measure and \( \dot{\mu}(x) \) satisfies Szeg\H{o} condition \( \int\log\dot{\mu}(x)\dd x >-\infty \).

\subsection{Multipoint Pad\'e Approximants}
Set \( w(z):=w(z;a,b)=\sqrt{(z-a)(z-b)} \) to be holomorphic in \( \C\setminus[a,b] \) and normalized so that \( w(z)=z+\mathcal O(1) \) as \( z\to\infty \). Given a Szeg\H{o} measure, one can define its Szeg\H{o} function by
\[
S_{\dot\mu}(z) := \exp\left\{\frac{w(z)}{2\pi\ic}\int_a^b\frac{\log\dot{\mu}(x)}{x-z}\frac{\dd x}{w_+(x)}\right\}, \quad z\in\overline\C\setminus[a,b],
\]
where \( F_\pm(x) := \lim_{y\to0} F(x\pm\ic y) \), \( x\in(a,b) \), for any function \( F(z) \) holomorphic off \( [a,b] \). This is a holomorphic and non-vanishing function in \( \overline\C\setminus[a,b] \) whose traces exist almost everywhere on \( [a,b] \) and satisfy \( |S_{\dot\mu\pm}(x)|^2=\dot{\mu}(x)\). Now, let \( \psi(z) \) be the conformal map of \( \overline\C\setminus[a,b] \) onto \( \D \) with \( \psi(\infty)=0 \) and \( \psi^\prime(\infty)>0 \). It is given by
\[
\psi(z) = \frac2{b-a}\left(z-\frac{b+a}2 - w(z)\right).
\]
We shall assume that the interpolation sets \( E_n \) are conjugate-symmetric (i.e., if \( e\in E_n \), then \( \bar e\in E_n \)) and lie sufficiently far away from \( [a,b] \) in the sense that
\begin{equation}
\label{cond-En}
\lim_{n\to\infty}\sum_{e\in E_n}\big(1-|\psi(e)|\big) = \infty.
\end{equation}
The above condition is always satisfied if there exists a neighborhood of \( [a,b] \) devoid of elements of all \( E_n \). 

\begin{theorem}
\label{thm:M1-MP}
Let \( f(z) \) be given by \eqref{markov} for a Szeg\H{o} measure \( \mu \), supported on \( [a,b] \), and \( p_n(z)/q_n(z) \) be the multipoint Pad\'e approximant of \( f(z) \) associated with \( E_n \), where the interpolation sets \( E_n \) are conjugate-symmetric and satisfy \eqref{cond-En}. Then it holds that
\[
f(z) - \frac {p_n(z)}{q_n(z)} = (2+o(1))\frac{S_{\dot\mu}^2(z)}{w(z)}\prod_{e\in E_n}\frac{\psi(z)-\psi(e)}{1-\psi(z)\overline{\psi(e)}}
\]
as \( n\to \infty \) locally uniformly in \( \overline\C\setminus[a,b] \) (condition \eqref{cond-En} ensures that the products above converge to zero locally uniformly in \( \overline\C\setminus[a,b] \)).
\end{theorem}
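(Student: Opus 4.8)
The plan is to read \eqref{ortho} as the assertion that $q_n$ is the $n$-th monic orthogonal polynomial for the varying measure $\dd\mu_n:=\dd\mu/v_n$ on $[a,b]$, to establish strong (Szeg\H{o}-type) asymptotics for $q_n$ off the interval, and to feed these into the error representation
\[
f(z)-\frac{p_n(z)}{q_n(z)}=\frac{v_n(z)}{q_n^2(z)}\int\frac{q_n^2(x)}{v_n(x)}\frac{\dd\mu(x)}{z-x}
\]
recorded just above. A preliminary reduction disposes of the sign of $v_n$: because $E_n$ is conjugate-symmetric and, by \eqref{cond-En}, stays bounded away from $[a,b]$, the polynomial $v_n$ is real and of constant sign $\varepsilon_n\in\{-1,1\}$ on $[a,b]$, and $\varepsilon_n$ cancels between the factor $v_n(z)$ and the measure $\dd\mu/v_n$; hence I may assume that $\dd\mu_n$ is a positive Szeg\H{o} measure and work with $|v_n|$ inside the weight.

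The algebraic heart of the argument is the identification of the Blaschke product $B_n(z):=\prod_{e\in E_n}\frac{\psi(z)-\psi(e)}{1-\psi(z)\overline{\psi(e)}}$ as the object that carries the varying factor $1/v_n$. Inverting $\psi$ through the Joukowski map $z=\frac{a+b}{2}+\frac{b-a}{4}\big(\psi(z)+\psi(z)^{-1}\big)$ yields the identity
\[
z-e=\frac{b-a}{4}\,\frac{\big(\psi(z)-\psi(e)\big)\big(\psi(z)\psi(e)-1\big)}{\psi(z)\,\psi(e)},
\]
which, after pairing conjugate points of $E_n$, exhibits each factor of $v_n$ as a Blaschke factor times functions analytic and non-vanishing in $\overline\C\setminus[a,b]$; taking the product and recalling $|\psi_\pm|=1$ on $[a,b]$ shows that $v_n/B_n=\psi^{-2n}\,\Pi_n$ for an explicit non-vanishing $\Pi_n$, and that the Szeg\H{o} function of $\dd\mu_n$ is $S_{\dot\mu}$ times a piece built from $B_n$. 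I would then invoke Szeg\H{o}'s theorem, transplanted to $\T$ via $\psi$, to obtain, locally uniformly in $\overline\C\setminus[a,b]$, strong asymptotics of the form $q_n(z)^2\sim\frac{h_n\,\Pi_n(z)}{2\,S_{\dot\mu}^2(z)}\,\psi(z)^{-2n}$, equivalently $v_n(z)\,h_n/q_n^2(z)\to 2\,S_{\dot\mu}^2(z)\,B_n(z)$, where $h_n:=\int q_n^2\,\dd\mu_n$ and the constant $2$ is exactly the leading Szeg\H{o} constant governing the norm asymptotics $h_n\asymp\mathrm{cap}([a,b])^{2n}$.

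It remains to evaluate the Cauchy transform $I_n(z):=\int q_n^2\,v_n^{-1}(z-x)^{-1}\,\dd\mu(x)$. Here I would use that the normalized measures $h_n^{-1}q_n^2\,\dd\mu_n$ converge weak-$*$ to the equilibrium measure $\omega_{[a,b]}=\frac{\dd x}{\pi\sqrt{(x-a)(b-x)}}$ of $[a,b]$, whose Cauchy transform equals $1/w(z)$; since $1/(z-x)$ is continuous on $[a,b]$ for each fixed $z\notin[a,b]$, this gives $I_n(z)\sim h_n/w(z)$, locally uniformly off $[a,b]$. Substituting the two asymptotics into the error representation, the norms $h_n$ cancel and one is left with
\[
f(z)-\frac{p_n(z)}{q_n(z)}=\frac{v_n(z)}{q_n^2(z)}\,I_n(z)\sim\frac{v_n(z)\,h_n}{q_n^2(z)\,w(z)}\longrightarrow (2+o(1))\,\frac{S_{\dot\mu}^2(z)}{w(z)}\,B_n(z),
\]
which is the claim; local uniform convergence of $B_n$ to $0$ is precisely the content of \eqref{cond-En}.

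The step I expect to be the main obstacle is the strong asymptotics of $q_n$ quoted above. Two features make it delicate: the measure $\mu$ may carry a singular component $\mu_s$, which must be shown to be asymptotically invisible to the leading behavior off $\supp(\mu)$ (a genuinely Szeg\H{o}-class phenomenon rather than a formal manipulation), and the weight is varying, so one needs a form of Szeg\H{o}'s theorem that is uniform as the interpolation points $E_n$ move, controlled only through \eqref{cond-En} and in particular allowing points to approach $[a,b]$. Making the error terms $o(1)$ uniform on compact subsets of $\overline\C\setminus[a,b]$ under this single hypothesis, and tracking the normalizing constant through the conformal transplant so that the factor $2$ emerges correctly, is where the real work lies.
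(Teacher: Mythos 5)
The paper does not prove Theorem~\ref{thm:M1-MP} at all: it is quoted verbatim from Stahl \cite[Theorem~2]{St00}, with the remark that \cite{CYL98} proved it under an extra admissibility condition and \cite{Totik} handled absolutely continuous measures with interpolation points separated from \( [a,b] \). So there is no internal argument to match your proposal against; the question is whether your outline constitutes an independent proof. It does not. The parts you actually execute are correct and standard: the error representation coming from \eqref{ortho}, the Joukowski identity \( z-e=\frac{b-a}{4}\frac{(\psi(z)-\psi(e))(\psi(z)\psi(e)-1)}{\psi(z)\psi(e)} \), the resulting factorization \( v_n=B_n\psi^{-2n}\Pi_n \) with \( \Pi_n \) zero-free off \( [a,b] \), and the constancy of the sign of \( v_n \) on \( [a,b] \) (which, note, needs only conjugate symmetry and \( E_n\cap[a,b]=\varnothing \)). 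But the step you defer to ``Szeg\H{o}'s theorem, transplanted to \( \T \) via \( \psi \)'' is not an auxiliary lemma — it \emph{is} the theorem. Both inputs you need, the ratio asymptotics \( v_nh_n/q_n^2\to 2S_{\dot\mu}^2B_n \) and the weak-\(*\) convergence \( h_n^{-1}q_n^2\,\dd\mu/v_n\cws\omega_{[a,b]} \) used to evaluate \( I_n \), are equivalent to (respectively, consequences of) the strong asymptotics for orthogonal polynomials with the varying weights \( \dot\mu/|v_n| \) under the sole hypothesis \eqref{cond-En} and in the presence of a singular part \( \mu_s \). Neither is available off the shelf in this generality; in particular, the weak-\(*\) convergence to the arcsine law for \emph{varying} weights is itself extracted from the strong asymptotics on \( [a,b] \) (the \( \cos^2 \) averaging), so your treatment of \( I_n \) presupposes the result being proven. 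The proposal is therefore a correct reduction of the theorem to its own hardest ingredient, not a proof.

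Two further inaccuracies are worth flagging. First, \eqref{cond-En} does \emph{not} imply that the sets \( E_n \) stay bounded away from \( [a,b] \): since \( 1-|\psi(e)|\to0 \) as \( e\to[a,b] \), the divergence of \( \sum_{e\in E_n}(1-|\psi(e)|) \) is perfectly compatible with interpolation points accumulating on the interval — the paper only says that separation is \emph{sufficient} for \eqref{cond-En}. Your opening reduction asserts the opposite of what your closing paragraph (correctly) identifies as the source of difficulty. Second, the normalization \( h_n\asymp\cp([a,b])^{2n} \) is the fixed-weight Szeg\H{o} constant and is false for varying weights; the correct normalization involves \( \Pi_n(\infty) \) and \( S_{\dot\mu}(\infty) \), and tracking it is part of producing the constant \( 2 \). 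If you want a genuine proof along these lines, the varying-weight Szeg\H{o} theory must be developed (or cited) explicitly; that is precisely what \cite{St00} and \cite{Totik} do.
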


As stated this theorem is taken from \cite[Theorem~2]{St00}, with an additional admissibility condition it was proven in \cite{CYL98}, and the case of absolutely continuous measures and the interpolation points separated from \( [a,b] \) was considered in \cite{Totik}.

\subsection{Irreducible Critical Points}

Let now \( [a,b]\subset(-1,1) \), in which case \( f\in \bar H^2_\R \). Let \( r_n(z)=p_n(z)/q_n(z) \) be an irreducible critical point of order \( n \) in rational \( \bar H_\R^2 \)-approximation of \( f(z) \). Since the denominator polynomials satisfy \eqref{ortho} with \( v_n(z)=\varkappa_n\tilde q_n^2(z) \), all their zeroes belong to \( [a,b] \) and therefore the zeroes of \( \tilde q_n(z) \) belong to \( [a,b]^{-1}:=\{x:x^{-1}\in[a,b]\} \). Then one can reformulate Theorem~\ref{thm:M1-MP} in a way more suitable for \( \bar H_\R^2 \)-approximants. 

To this end, let \( w(z)=w(z;a,b) \) be as above and set \( \tilde w(z) := zw(1/z) \). Observe that \( \tilde w(z) \) is holomorphic in \( \C\setminus[a,b]^{-1} \) and normalized so that \( w(0)=1 \). Put
\[
\dd\omega_{[a,b],\T}(x) := \frac{1-ab}{2K}\frac{\dd x}{|(w\tilde w)(x)|},  \quad x\in[a,b],
\]
where the constant \( K \) turns \( \omega_{[a,b],\T} \) into a probability measure and is, in fact, the complete elliptic integral of the first kind with modulus \( (b-a)/(1-ab) \). The measure \( \omega_{[a,b],\T} \) can be interpreted from the point of view of potential theory as the condenser equilibrium distribution on \( [a,b] \) of the condenser \( ([a,b],\T) \), see \cite{BStW01}. Define
\[
G_{\dot\mu} := \exp\left\{\int\log\dot\mu(x)\dd\omega_{[a,b],\T}(x)\right\}.
\]
It holds that \( G_{\dot\mu}>0 \) if and only if \( \mu \) is a Szeg\H{o} measure on \( [a,b] \), see \cite[Lemma~2.37]{BStW01}. Put
\[
D_{\dot\mu}(z) := \exp\left\{\frac{(w\tilde w)(z)}{2\pi\ic} \int_a^b \frac{1-2xz+x^2}{(x-z)(1-xz)}\log\left(\frac{\dot\mu(x)}{G_{\dot\mu}}\right)\frac{\dd x}{(w_+\tilde w)(x)}\right\}
\]
for \( z\in\overline\C\setminus\big([a,b]\cup[a,b]^{-1}\big) \). The function \( D_{\dot\mu}(z) \) is non-vanishing and holomorphic in \( \overline\C\setminus\big([a,b]\cup[a,b]^{-1}\big) \), its argument has zero increment along \( \T \) and \( |D_{\dot\mu}(\tau)|\equiv 1 \) for \( \tau\in \T \). Moreover, its traces exist almost everywhere on \( [a,b]\cup[a,b]^{-1} \) and satisfy \( G_{\dot\mu}|D_{\dot\mu\pm}(x)|^2 = \dot\mu(x) \), \( x\in[a,b] \), and \( G_{\dot\mu}|D_{\dot\mu\pm}(x)|^{-2} = \dot\mu(1/x) \), \( x\in[a,b]^{-1} \), see \cite[Lemma~2.40]{BStW01}. Further, let
\[
\varphi(z) := \exp\left\{ \pi\frac{1-ab}{2K}\int_1^z\frac{\dd s}{(w\tilde w)(s)} \right\},
\]
where the path of integration belongs to \( \overline\C\setminus\big([a,b]\cup[a,b]^{-1}\big) \). This function is in fact the conformal map of the ring domain  \( \overline\C\setminus\big([a,b]\cup[a,b]^{-1}\big) \) onto the annulus \( \{z:~\rho<|z|<1/\rho\}\), where \( \rho := \varphi(b) \). The following theorem was proven in \cite[Theorem~8]{BStW01}.

\begin{theorem}
\label{thm:M1-CP}
Let \( f(z) \) be given by \eqref{markov}, for a Szeg\H{o} measure \(\mu \) on \( [a,b]\subset(-1,1) \). Further, let \( \{r_n(z)\} \) be a sequence of irreducible critical points in rational \( \bar H_\R^2 \)-approximation of \( f(z) \). Then it holds that
\[
f(z)-r_n(z) = \big(2G_{\dot\mu}+o(1)\big)\frac{D_{\dot\mu}^2(z)}{w(z)}\left(\frac\rho{\varphi(z)}\right)^{2n}
\]
as \( n\to \infty \) locally uniformly in \( \overline\C\setminus\big([a,b]\cup[a,b]^{-1}\big) \). 
\end{theorem}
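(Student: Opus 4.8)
The plan is to deduce Theorem~\ref{thm:M1-CP} from Theorem~\ref{thm:M1-MP} by recognizing an irreducible critical point as a multipoint Pad\'e approximant whose interpolation set is the reflection of its own poles, and then by resolving the resulting self-consistency. Since \( r_n=p_n/q_n \) is an irreducible critical point, it is the multipoint Pad\'e approximant of \( f \) with \( v_n(z)=\varkappa_n\tilde q_n^2(z) \), and, as recorded in the excerpt, the zeroes \( \xi_{n,1},\dots,\xi_{n,n} \) of \( q_n \) all lie in \( [a,b] \). Hence the interpolation set \( E_n \) consists of the \( n \) reflected points \( 1/\xi_{n,j}\in[a,b]^{-1} \), each counted twice; being a subset of \( [a,b]^{-1}\subset\R \) it is conjugate-symmetric. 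Because \( [a,b]\subset(-1,1) \), the set \( [a,b]^{-1} \) (taken in \( \overline\C \), so that it contains \( \infty \) when \( 0\in[a,b] \)) is compact and disjoint from \( [a,b] \); since \( \psi \) maps \( \overline\C\setminus[a,b] \) into \( \D \) with \( |\psi|<1 \) off \( [a,b] \), compactness yields a constant \( c>0 \) with \( 1-|\psi(e)|\geq c \) for every \( e\in[a,b]^{-1} \). As \( E_n \) has \( 2n \) elements, condition~\eqref{cond-En} holds and Theorem~\ref{thm:M1-MP} applies. Writing \( B_n(z):=\prod_{e\in E_n}\big(\psi(z)-\psi(e)\big)\big/\big(1-\psi(z)\overline{\psi(e)}\big) \), it gives
\[
(f-r_n)(z)=\big(2+o(1)\big)\frac{S_{\dot\mu}^2(z)}{w(z)}\,B_n(z),
\]
so that, after cancelling \( w(z) \) and the factor \( 2 \), the theorem reduces to the identification
\[
S_{\dot\mu}^2(z)\,B_n(z)=\big(G_{\dot\mu}+o(1)\big)\,D_{\dot\mu}^2(z)\left(\frac{\rho}{\varphi(z)}\right)^{2n}.
\]

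To establish this I would first pin down the asymptotic location of the poles \( \xi_{n,j} \). Rewriting \eqref{ortho} with \( v_n=\varkappa_n\tilde q_n^2 \) shows that \( q_n \) is the monic orthogonal polynomial of degree \( n \) with respect to the varying measure \( \dd\mu(x)/\tilde q_n^2(x) \) on \( [a,b] \). Treating \( \tilde q_n^2 \) as an external field whose normalized logarithm is asymptotically twice the potential of the reflection onto \( [a,b]^{-1} \) of the pole-counting measure \( \nu_n:=\tfrac1n\sum_j\delta_{\xi_{n,j}} \), one is led to a self-consistent weighted equilibrium problem on \( [a,b] \). Because the reflection \( z\mapsto1/z \) fixes \( \T \) and interchanges the plates of the condenser \( ([a,b],[a,b]^{-1}) \), this equilibrium is solved by the condenser distribution, and one concludes \( \nu_n\cws\omega_{[a,b],\T} \). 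At the logarithmic scale this already matches the exponential decay rate of \( B_n \) with that of \( (\rho/\varphi)^{2n} \), both being governed by the modulus of the conformal map \( \varphi \) of the ring domain \( \overline\C\setminus\big([a,b]\cup[a,b]^{-1}\big) \).

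It then remains to upgrade this to strong asymptotics and to match the Szeg\H{o}-type factors. Viewing \( q_n \) as orthogonal polynomials for the varying weight \( \dd\mu/\tilde q_n^2 \) and invoking the Szeg\H{o} condition on \( \dot\mu \), I would extract the strong (ratio) asymptotics of \( q_n/\tilde q_n \) and hence the precise limit of \( B_n(z)\,(\varphi(z)/\rho)^{2n} \). The bridge between the single-interval Szeg\H{o} function \( S_{\dot\mu} \) and the condenser Szeg\H{o} function \( D_{\dot\mu} \) is the partial-fraction identity
\[
\frac{1-2xz+x^2}{(x-z)(1-xz)}=\frac1{x-z}+\frac{x}{1-xz},
\]
which exhibits \( D_{\dot\mu} \) as the single-interval Szeg\H{o} factor on \( [a,b] \) combined with the reflected factor supported on \( [a,b]^{-1} \) that the doubled interpolation points contribute; the constant \( G_{\dot\mu}=\exp\{\int\log\dot\mu\,\dd\omega_{[a,b],\T}\} \) is exactly the renormalization enforcing \( |D_{\dot\mu}|\equiv1 \) on \( \T \). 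Assembling these pieces yields the displayed identification and hence the theorem.

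I expect the main obstacle to be the self-consistency: the orthogonality weight \( 1/\tilde q_n^2 \) depends on the very polynomial \( q_n \) one is trying to control, so the equilibrium and the strong asymptotics must be established simultaneously rather than in sequence. Concretely, one must rule out poles migrating toward \( \T \) or clustering, so that \( \tilde q_n \) stays bounded away from \( 0 \) on \( [a,b] \) and the varying weight remains a genuine Szeg\H{o} weight uniformly in \( n \); and one must then push the strong asymptotics through under only a Szeg\H{o} condition on \( \dot\mu \) and in the presence of the singular part \( \mu_s \), which is the technically heaviest step.
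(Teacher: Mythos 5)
The first half of your argument is correct and coincides with the route the paper takes (the one-interval theorem itself is quoted from \cite[Theorem~8]{BStW01}, but the paper carries out the method in full when proving the multi-interval analogue, Theorem~\ref{thm:Mg-CP}): an irreducible critical point is the multipoint Pad\'e approximant at the reflections of its own poles, each doubled; since the poles lie in \( [a,b]\subset(-1,1) \), the interpolation sets sit in the compact set \( [a,b]^{-1} \), which is disjoint from \( [a,b] \), so \eqref{cond-En} holds and Theorem~\ref{thm:M1-MP} applies with no self-consistency issue at all --- that theorem is valid for an arbitrary admissible scheme, including the self-generated one. The problem is thereby correctly reduced to the identification \( S_{\dot\mu}^2(z)B_n(z)=\big(G_{\dot\mu}+o(1)\big)D_{\dot\mu}^2(z)(\rho/\varphi(z))^{2n} \).

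The gap is in how you propose to prove that identification. Weak-star convergence of the pole-counting measures to \( \omega_{[a,b],\T} \) controls only \( |B_n|^{1/2n} \), i.e., \( n \)-th root asymptotics; the identity above is a strong asymptotic statement about the Blaschke-type product \( B_n \), and extracting it from the pole locations would require Szeg\H{o}-level control of the zeroes of \( q_n \) with respect to the varying weight \( \dd\mu/\tilde q_n^2 \) --- which, as you yourself note, is circular because the weight depends on \( q_n \). You flag this as the main obstacle but do not resolve it, and it is precisely the step the actual proof is designed to bypass: the pole distribution is never determined first. Instead one forms the ratio \( H_n(z) \) of the two candidate expressions for the error and studies \( h_n=\log|H_n| \). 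This function is harmonic in \( \D\setminus[a,b] \) and across \( \T \); it vanishes identically on \( [a,b] \) because the boundary relations \( |S_{\dot\mu\pm}|^2=\dot\mu \), \( G_{\dot\mu}|D_{\dot\mu\pm}|^2=\dot\mu \), and the unimodularity of \( \psi_n \) and \( \varphi/\rho \) on \( [a,b] \) cancel exactly; and it equals a constant plus \( o(1) \) on \( \T \), as one sees by rewriting \( f-r_n=\varkappa_n b_n^{-2}q_nR_n \) with \( b_n=q_n/\tilde q_n \) unimodular on \( \T \). An argument-increment (flux) computation verifies the hypothesis of the two-constants lemma \cite[Lemma~4.7]{BStW01}, which forces the constant to be \( o(1) \); the maximum principle and positivity on \( (b,1) \) then give \( H_n\to1 \), and a reflection argument transfers the estimate outside \( \overline\D \). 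To complete your proof you should replace the self-consistent equilibrium/varying-weight step by this harmonic comparison on the condenser \( ([a,b],\T) \).
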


\section{Pad\'e Approximation of Markov Functions On Several Intervals}

In this section we shall explain how results from \cite{Y18} specialize to the case of Markov functions on several intervals. Part of the goal of this section is to provide as many explicit formulae as possible (most of them were not presented in \cite{Y18}). 

\subsection{Measures of Orthogonality}
Assume now that \( \supp(\mu) = \cup_{i=1}^{g+1}[a_i,b_i] =: \Delta \), where \( g\geq1 \) and, for convenience, we assume that \( b_i<a_{i+1} \), \( i\in\{1,\ldots,g\} \). Let 
\begin{equation}
\label{w}
w(z) := \sqrt{(z-a_1)(z-b_1)\cdots(z-a_{g+1})(z-b_{g+1})}
\end{equation}
be holomorphic in \( \overline\C\setminus\Delta \) and normalized so that \( w(z)=z^{g+1} + \mathcal O(z^g) \) as \( z\to\infty \). In this section we shall suppose that \eqref{decomp} is replaced by more stringent assumption 
\begin{equation}
\label{smooth-meas1}
\dd\mu(x) = -\frac1{\pi\ic}\frac{\rho(x)\dd x}{w_+(x)}, \quad x\in\Delta,
\end{equation}
where \( \rho(x) \) is a real-valued, smooth, see \eqref{sobolev} further below, and non-vanishing function on \( \Delta \) whose sign distribution is such that \( \mu \) is a positive measure. To capture the positivity, choose \( m(x) \) to be a monic polynomial of degree \( g \) with exactly one zero in each gap \( (b_k,a_{k+1}) \), \( k\in\{1,\ldots,g\} \). Observe also that \( -\ic w_+(x) = (-1)^{g+1-k}|w(x)| \) for \( x\in[a_k,b_k] \), \( k\in\{1,\ldots,g+1\} \). Hence, if we write
\begin{equation}
\label{smooth-meas2}
\rho(x) =: \dot\mu(x) m(x), \quad x\in\Delta, 
\end{equation}
then \( \dot\mu(x) \) is a positive function on \( \Delta \) from which we require that its logarithm belongs to a fractional Sobolev space:
\begin{equation}
\label{sobolev}
\iint_{\Delta\times\Delta}\left|\frac{\log\dot\mu(x)-\log\dot\mu(y)}{x-y}\right|^p \dd x\dd y<\infty
\end{equation}
for some \( p>4 \) (then \( \log\dot\mu(x) \) is H\"older continuous with exponent at least \( 1-2/p>1/2 \)). Notice that \( \rho(x) \) also admits a continuous branch of the logarithm on \( \Delta \) that satisfies \eqref{sobolev} (this is the condition used in \cite{Y18}). A particular choice of \( m(x) \), on which \( \dot\mu(x) \) does depend, is not important to us. In fact, we could have chosen \( m(x) \) simply to be \( 1 \) or \( -1 \) on the intervals comprising \( \Delta \). However, in the author's opinion, some of the formulae further below would have been less elegant in this case.

\subsection{Riemann Surface and its Holomorphic Differentials}

Let \( \RS \) be a Riemann surface realized as two copies of \( \overline\C\setminus\Delta \), say \( \boldsymbol D \) and \( \boldsymbol D^* \) glued crosswise along \( \Delta \). Formally, it can be defined as 
\[ 
\RS=\left\{\z=(z,w):w^2=\prod_{i=1}^{g+1}(z-a_i)(z-b_i)\right\}.
\]
Of course, it holds that \( \RS = \boldsymbol D \cup \boldsymbol\Delta \cup \boldsymbol D^* \), where \( \boldsymbol \Delta := \pi^{-1}(\Delta) \) and \( \pi(\z) = z \) is the natural projection. We denote by \( \cdot^* \) a conformal involution defined by \( \z^* = (z,-w) \) for \( \z=(z,w) \). We choose a homology basis \( \{\ualpha_i,\ubeta_i\}_{i=1}^g\) on \( \RS \) in the following way: \(\ualpha_i:= \pi^{-1}([b_i,a_{i+1}]) \) and is oriented away from \( \boldsymbol b_i \) on \( \boldsymbol D \) while \( \ubeta_i\subset\boldsymbol D\cup\bd \) is such that \( \pi(\ubeta_i) \) is a convex Jordan curve that passes through \( a_1 \) and \( b_i \) and is oriented counter-clockwise (we also assume that \( \pi(\ubeta_i)\setminus\{a_1\} \) is contained in the interior of \( \pi(\ubeta_{i+1})\)). 

Let \( \mathsf V \) be the following matrix:
\[
\mathsf V := \left[\int_{b_i}^{a_{i+1}}\frac{x^l\dd x}{w(x)}\right]_{i=1,l=0}^{g,g-1},
\]
where \( i \) is the row index and \( l \) is the column one. It is straightforward to see that
\[
\det(\mathsf V) = \int_{b_1}^{a_2}\cdots\int_{b_g}^{a_{g+1}}\frac{V(x_1,\ldots,x_g)}{w(x_1)\cdots w(x_g)}\dd x_g\cdots \dd x_1 \neq 0,
\]
where \( V(x_1,\ldots,x_g) \) is the Vandermonde determinant and the inequality \( \det(\mathsf V) \neq 0 \) follows from the obvious fact that all the gaps \( (b_k,a_{k+1}) \) are disjoint. Hence, there exist polynomials \( l_i(x) \), \( \deg(l_i)=g-1 \), such that
\begin{equation}
\label{li}
\int_{b_k}^{a_{k+1}}\frac{l_i(x)\dd x}{w(x)} = \delta_{ki}, \quad i,k\in\{1,\ldots,g\},
\end{equation}
where \( \delta_{ki} \) is the usual Kronecker symbol. Indeed, the coefficients of these polynomials are the columns of \( \mathsf V^{-1} \). Moreover, since these polynomials have degree \( g-1 \) and real coefficients, \eqref{li} yields that each \( l_i(x) \) has exactly one zero in each gap \( (b_k,a_{k+1}) \) for \( k\neq i \). 

Observe also that the differentials
\begin{equation}
\label{hol-diff}
\mathcal H_i(\z) := \left\{
\begin{array}{rl} 
(l_i(z)\dd z)/(2w(z)), & \z\in\boldsymbol D, \smallskip \\
-\mathcal H_i(\z^*), & \z\in\boldsymbol D^*, 
\end{array}
\right.
\end{equation}
are holomorphic on \( \RS \) and normalized so that \( \oint_{\ualpha_k}\mathcal H_i = \delta_{ki} \) (since \( w_+(x)=-w_-(x) \) on \( \Delta \), these differentials do holomorphically extend across \( \bd \)). It is a well-known fact of the theory of compact Riemann surfaces, see \cite[Section~III.2.7]{FarkasKra}, that the integrals of the normalized holomorphic differentials over the \(\ubeta\)-cycles of the chosen homology basis form a symmetric matrix with positive definite imaginary part:
\begin{equation}
\label{B}
\mathsf B := \big[\mathsf B_{kj}\big]_{k,j=1}^g, \quad \mathsf B_{kj} := \oint_{\ubeta_k}\mathcal H_j = -\sum_{i=1}^k\int_{a_i}^{b_i}\frac{l_j(x)\dd x}{w_+(x)},
\end{equation}
where to get the second representation for \( \mathsf B_{kj} \) we used holomorphy of the integrands to deform \( \ubeta_k \) to \(\pi^{-1}([a_1,b_k])\cap(\boldsymbol D\cup\bd)\) and the fact that the integrals over the gaps cancel each other out. This  representation shows that all the entries of \( \mathsf B \) are purely imaginary.

\subsection{Third Kind Differentials}
Let \( m_\infty(z) \) be a polynomial of degree \( g \) such that
\begin{equation}
\label{m-infty}
\frac1{2\pi\ic}\int_{|z|=\rho} \frac{m_\infty(s)\dd s}{w(s)}=-1 \qandq \int_{b_i}^{a_{i+1}}\frac{m_\infty(x)\dd x}{w(x)} = 0, \;\; i\in\{1,\ldots,g\},
\end{equation}
where the circle \( |z|=\rho \) is positively oriented and contains \( \Delta \) in its interior. Using polynomials \( l_i(z) \) from \eqref{li}, it can be readily checked that
\[
m_\infty(x) = -x^g + \sum_{i=1}^g\left(\int_{b_i}^{a_{i+1}}\frac{y^g\dd y}{w(y)}\right)l_i(x).
\]
Clearly, the polynomial \( m_\infty(x) \) has real coefficients and therefore has exactly one simple zero in each gap \( (b_i,a_{i+1}) \). Thus, the measure \( \dd\omega_\Delta(x) = (m_\infty(x)\dd x)/(\pi\ic w_+(x)) \) is a positive probability measure on \( \Delta \). In fact, from the point of view of potential theory, \( \omega_\Delta \) can be interpreted as the logarithmic equilibrium distribution on \( \Delta \) (in particular, if one chooses \( m(x)=m_\infty(x) \) in \eqref{smooth-meas2}, then \( \dot\mu(x) \) again can be interpreted as the Radon-Nikodym derivative with respect to the logarithmic equilibrium distribution).

More generally, given \( e\in\C\setminus[a_1,b_{g+1}] \), let \( m_e(z) \) be a polynomial of degree \( g \) such that
\begin{equation}
\label{m-e}
\frac1{2\pi\ic}\int_{|z-e|=\rho} \frac{m_e(s)}{s-e}\frac{\dd s}{w(s)}=1 \qandq \int_{b_i}^{a_{i+1}}\frac{m_e(x)}{x-e}\frac{\dd x}{w(x)} = 0, \;\; i\in\{1,\ldots,g\},
\end{equation}
where the circle \( |z-e|=\rho \) is positively oriented and contains \( \Delta \) in its exterior. Similarly to \( m_\infty(x) \), one can check that
\[
m_e(x) = c_e\left(1-\sum_{i=1}^g\left(\int_{b_i}^{a_{i+1}}\frac1{y-e}\frac{\dd y}{w(y)}\right)(x-e)l_i(x)\right),
\] 
where the normalizing constant \( c_e \) is chosen so that the first condition in \eqref{m-e} is fulfilled. Notice that \( m_e(x) \) is a polynomial with real coefficients when \( e \)  is real (in particular, it has exactly one zero in each gap \( (b_i,a_{i+1})\)). In this situation the corresponding measure \( (m_e(x)\dd x)/(\pi\ic(x-e) w_+(x)) \) is a positive probability measure on \( \Delta \) and can be interpreted as the weighted equilibrium distribution in the field generated by a single unit charge placed at \( e \), or equivalently as the balayage of the Dirac mass at \( e \) to \( \Delta \).  This is no longer the case when \( \Im e\neq 0 \). However, since \( m_{\bar e}(z) = \overline{m_e(\bar z)}\), it holds that \( (z-\bar e)m_e(z) + (z-e)m_{\bar e}(z) \) is a polynomial with real coefficients as well (again, it must have exactly one zero in each gap). In this case the average of densities corresponding to \( e \) and \( \bar e \) produces a positive probability measure with a similar potential-theoretic interpretation.

Let \( \Omega_e(\z) \) be the normalized (that is, having zero periods on the \( \ualpha \)-cycles) third kind differential on \( \RS \) with two simple poles at \( \boldsymbol e,\boldsymbol e^* \), \( \boldsymbol e\in\boldsymbol D \), with respective residues \( 1 \) and \( -1 \), which is otherwise holomorphic. It can be readily checked that
\[
\Omega_e(\z) = \left\{
\begin{array}{rl} 
 (m_e(z)\dd z)/((z-e)w(z)), & \z\in\boldsymbol D, \smallskip \\
-\Omega_e(\z^*), & \z\in\boldsymbol D^*. 
\end{array}
\right.
\]
When \( e \) is real (including \( e=\infty\), in which case \( m_e(z)/(z-e) \) above is replaced by \( m_\infty(z) \)), \( \Omega_e(\z) \) can be considered as normalized to have purely imaginary periods on all of the cycles of the homology basis due to the second requirements in \eqref{m-infty} and \eqref{m-e}. This is no longer true when \( \Im e\neq 0 \), however, the differential \( \Omega_e(\z) + \Omega_{\bar e}(\z) \) can also be seen as normalized to have purely imaginary periods.

\subsection{Blaschke-type Functions}
Let \( E_n \) be a conjugate-symmetric interpolation set in \( \overline\C\setminus[a_1,b_{g+1}] \) (as usual, it consists of \( 2n \) not necessarily finite nor distinct elements).  Let
\begin{equation}
\label{psin}
\psi_n(z) := \exp\left\{\int_{b_{g+1}}^z\left(\sum_{e\in E_n,|e|<\infty}\frac{m_e(s)}{s-e} + \sum_{e\in E_n,|e|=\infty}m_\infty(s)\right)\frac{\dd s}{w(s)}\right\},
\end{equation}
where the path of integration lies entirely in \( \C\setminus(-\infty,b_{g+1}) \). Furthermore, set 
\begin{equation}
\label{omegank}
\omega_{n,k} := \mathrm{fr}\left\{-\frac1{2\pi\ic}\sum_{i=1}^k \int_{a_i}^{b_i}\left(\sum_{e\in E_n,|e|<\infty}\frac{m_e(x)}{x-e} + \sum_{e\in E_n,|e|=\infty}m_\infty(x)\right)\frac{\dd x}{w_+(x)}\right\}
\end{equation}
\( k\in\{1,\ldots,g\} \), where \( \mathrm{fr}\{x\}\in[0,1) \) is such that \( x-\mathrm{fr}\{x\} \in \Z \). It follows from the conjugate-symmetry of \( E_n \) and the discussion after \eqref{m-e} that these constants are real.

\begin{proposition}
\label{prop:psin}
The function \( \psi_n(z) \) is analytic in \( \overline\C\setminus[a_1,b_{g+1}] \) and has a zero at each \( e\in E_n \) of order equal to the multiplicity of \( e \) in \( E_n \). It holds that \( |\psi_n(z)|<1 \) in \( \overline\C\setminus\Delta \) and
\begin{equation}
\label{psin-bdry}
\left\{
\begin{array}{ll}
\psi_{n+}(x) = \psi_{n-}(x)e^{-4\pi\ic\omega_{n,k}}, & x\in(b_k,a_{k+1}), \quad k\in\{1,\ldots,g\},\medskip \\
|\psi_{n\pm}(x)| \equiv 1, & x\in\Delta.
\end{array}
\right.
\end{equation}

\end{proposition}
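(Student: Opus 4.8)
The plan is to write \( \psi_n(z)=\exp\{\int_{b_{g+1}}^z M_n(s)\,\dd s/w(s)\} \), where
\[
M_n(s):=\sum_{e\in E_n,|e|<\infty}\frac{m_e(s)}{s-e}+\sum_{e\in E_n,|e|=\infty}m_\infty(s)
\]
is the numerator appearing in \eqref{psin}, and to read off all four assertions from the pole and period structure of the meromorphic function \( M_n(s)/w(s) \), which is the restriction to \( \boldsymbol D \) of the differential \( \sum_{e\in E_n}\Omega_e \). Throughout I would use the normalizations in \eqref{m-e} and \eqref{m-infty} together with the fact that \( w_-=-w_+ \) on \( \Delta \).

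First I would settle analyticity and the zeros. Since \( w \) is analytic and non-vanishing on \( \overline\C\setminus\Delta\supset\overline\C\setminus[a_1,b_{g+1}] \), and \( M_n \) is rational with poles only at the finite points of \( E_n \), the function \( M_n/w \) is single-valued and meromorphic on the simply connected domain \( \overline\C\setminus[a_1,b_{g+1}] \), its only singularities being the points of \( E_n \). The first condition in \eqref{m-e} shows that the residue of \( M_n(s)/w(s) \) at a finite \( e\in E_n \) equals \( 1 \), hence equals the multiplicity of \( e \), and the expansion \( m_\infty(s)/w(s)\sim -1/s \) at infinity gives an analogous statement there. Because the domain is simply connected and all these residues are integers, every period of \( M_n\,\dd s/w \) lies in \( 2\pi\ic\Z \); therefore \( \psi_n \) is single-valued and analytic on \( \overline\C\setminus[a_1,b_{g+1}] \) with a zero of order \( \mathrm{mult}(e) \) at each \( e\in E_n \) (including \( \infty \)). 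This yields the first two assertions.

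Next I would prove the boundary relations \eqref{psin-bdry}. For \( x\in(b_k,a_{k+1}) \) the quantity \( \log\psi_{n+}(x)-\log\psi_{n-}(x) \) equals the integral of \( M_n\,\dd s/w \) over a loop encircling \( [x,b_{g+1}] \); contracting this loop onto the bands \( [a_j,b_j] \), \( j>k \), and using \( w_-=-w_+ \) converts each band into the contribution \( -2\int_{a_j}^{b_j}M_n\,\dd x/w_+ \), while the enclosed poles contribute integer multiples of \( 2\pi\ic \). I would then pass from the sum over \( j>k \) to a sum over \( i\le k \) via the identity \( \int_\Delta M_n\,\dd x/w_+=2n\pi\ic \), which holds because each density \( m_e(x)/\big(\pi\ic(x-e)w_+(x)\big) \), as well as \( m_\infty(x)/\big(\pi\ic\,w_+(x)\big) \), integrates to \( 1 \) over \( \Delta \) and \( E_n \) has \( 2n \) elements. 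Reducing modulo \( 2\pi\ic \) (so that the integer-residue contributions and the term \( -4n\pi\ic \) drop out) produces exactly \( -4\pi\ic\,\omega_{n,k} \) by the definition \eqref{omegank}, which is the first line of \eqref{psin-bdry}. For the second line I would note that conjugate-symmetry of \( E_n \) and the reality of \( m_e+m_{\bar e} \) (and of \( m_\infty \)) give \( \psi_n(\bar z)=\overline{\psi_n(z)} \), whence \( \psi_{n-}(x)=\overline{\psi_{n+}(x)} \) and \( |\psi_{n+}(x)|^2=\psi_{n+}(x)\psi_{n-}(x)=\exp\{\int^+ + \int^-\} \) for \( x\in\Delta \); here the band contributions cancel pointwise (again \( w_-=-w_+ \)) and the gap contributions vanish by the second conditions in \eqref{m-e} and \eqref{m-infty}, giving \( |\psi_{n\pm}(x)|\equiv1 \).

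Finally, the inequality \( |\psi_n|<1 \) follows by the maximum principle. Because the jump of \( \psi_n \) across each gap is multiplication by the unimodular constant \( e^{-4\pi\ic\omega_{n,k}} \), the two-sided boundary values of \( \log|\psi_n| \) on every gap agree, so \( \log|\psi_n| \) extends harmonically across the gaps and is harmonic on all of \( \overline\C\setminus\Delta \) except for logarithmic poles (value \( -\infty \)) at the points of \( E_n \), with boundary value \( 0 \) on \( \Delta \) by the relation just proved. As \( E_n \) is nonempty, \( \log|\psi_n| \) is non-constant, and the maximum principle forces \( \log|\psi_n|<0 \), that is \( |\psi_n(z)|<1 \), on \( \overline\C\setminus\Delta \). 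I expect the genuinely delicate step to be the gap-jump computation: fixing the orientation of the encircling loop, the sign of \( w_\pm \), the passage from the bands to the right of the gap to those on the left, and the reduction modulo \( 2\pi\ic \) all at once is where the bookkeeping is most error-prone.
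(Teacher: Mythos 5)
Your proof is correct and follows essentially the same route as the paper's: single-valuedness of \( \psi_n \) from the integer residues of the integrand at the finite points of \( E_n \) and at infinity, the boundary relations \eqref{psin-bdry} from contour deformation onto \( \Delta \) together with \( w_-=-w_+ \) and the normalizations \eqref{m-infty}--\eqref{m-e}, and \( |\psi_n(z)|<1 \) from the maximum principle applied to the subharmonic function \( \log|\psi_n(z)| \), which vanishes on \( \Delta \). The paper compresses the gap-jump bookkeeping into a single sentence; your more explicit computation, including the identity \( \int_\Delta M_n(x)\,\dd x/w_+(x)=2n\pi\ic \) needed to pass from the bands with \( j>k \) to those with \( i\le k \) modulo \( 2\pi\ic \), is exactly what that sentence hides.
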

\begin{proof}
The integrand in \eqref{psin} behaves like \( -ks^{-1} + \mathcal O(s^{-2}) \) as \( s\to\infty \), where \( k \) is the multiplicity of \( \infty \) in \( E_n \). Therefore, the integral of the integrand is equal to an integer multiple of \( 2\pi\ic \) on any closed curve encircling \( [a_1,b_{g+1}] \), which implies analyticity of \( \psi_n(z) \) in \( \overline\C\setminus[a_1,b_{g+1}] \). Vanishing of \( \psi_n(z) \) at \( e\in E_n \) follows from the first requirements in \eqref{m-infty} and \eqref{m-e}. Since the integrand in \eqref{psin} is real on \( \R\setminus\Delta \), \eqref{psin-bdry} follows from conjugate symmetry of \( E_n \) and the second requirements in \eqref{m-infty} and \eqref{m-e}. As the function \( \log|\psi_n(z)| \) is subharmonic in \( \overline\C\setminus\Delta \) and is identically zero on \( \Delta \), the conclusion \( |\psi_n(z)|<1 \) follows from the maximum principle for subharmonic functions \cite[Theorem~2.3.1]{Ransford}.
\end{proof}

For the future comparison with \cite{Y18}, let us point out that a function
\begin{equation}
\label{Sn}
S_n(\z) := \exp\left\{\int_{\boldsymbol b_{g+1}}^\z G_n\right\}\left\{ \begin{array}{ll} 1, & \z\in\boldsymbol D, \smallskip \\ v_n^{-1}(z), & \z\in\boldsymbol D^*,\end{array}\right.
\end{equation}
was defined there, where, as before, \( v_n(z)=\prod_{e\in E_n,|e|<\infty}(z-e) \), \( \boldsymbol a_i,\boldsymbol b_i \) are the ramification point of \( \RS \) with the respective natural projections \( a_i,b_i \), \( i\in\{1,\ldots,g+1\} \), and \( G_n(\z) \) is a meromorphic differential on \( \RS \) given by
\[
G_n(\z) := \frac12\sum_{e\in E_n,|e|<\infty}\left(\frac{\dd z}{z-e}-\Omega_e(\z)\right) - \frac12\sum_{e\in E_n,|e|=\infty}\Omega_\infty(\z).
\]
This function is holomorphic and non-vanishing in \( \pi^{-1}(\C\setminus[a_1,b_{g+1}]) \) with a pole and a zero of order \( n \) lying on top of infinity in \( \boldsymbol D\) and \( \boldsymbol D^* \), respectively. It then follows that
\begin{equation}
\label{psin-sn}
\psi_n(z) = v_n(z)S_n(\z^*)S_n^{-1}(\z), \quad \z\in\boldsymbol D.
\end{equation}
As far as the boundary values of \( S_n(\z) \) are concerned, it holds that
\begin{equation}
\label{Sn-jump}
S_{n+}(\s)=S_{n-}(\s)\left\{
\begin{array}{rl} 
v_n(s), & \s\in\bd, \smallskip \\
e^{2\pi\ic\omega_{n,k}}, & \s\in\ualpha_k, \;\; k\in\{1,\ldots,g\}.
\end{array}
\right.
\end{equation}
There exists an alternative construction of the functions \( S_n(\z) \). It  will be presented further below in Section~\ref{sec:3.8} for the sake of completeness of the exposition. There, rather than using third kind differentials, we shall use Riemann theta functions.

\subsection{Szeg\H{o} Functions}

Let \( p(z) \) be a monic polynomial of degree \( d\leq g \) with simple zeroes, say \( z_1,\ldots,z_d \) (of course, when \( d=0 \), there are no zeroes). Assume that the zeroes of \( p(z) \) do not belong to \( \Delta \). Given \( \mu \) as in \eqref{smooth-meas1}--\eqref{smooth-meas2}, let
\begin{equation}
\label{szego}
S_{\dot\mu}(z) := \exp\left\{\frac1{2\pi\ic}\frac{w(z)}{p(z)}\left[\int_\Delta\frac{\log\dot\mu(x)}{x-z}\frac{p(x)\dd x}{w_+(x)} - \sum_{i=1}^g\int_{b_i}^{a_{i+1}}\frac{2\pi\ic c_{\dot\mu,i}}{y-z}\frac{p(y)\dd y}{w(y)}\right]\right\},
\end{equation}
\( z\in\overline\C\setminus[a_1,b_{g+1}] \), where the constants \( c_{\dot\mu,i} \) are defined by
\begin{equation}
\label{cmui}
c_{\dot\mu,i} := \frac1{2\pi\ic}\int_\Delta\log\dot\mu(x)\frac{l_i(x)\dd x}{w_+(x)}.
\end{equation}

\begin{proposition}
\label{prop:szego}
The Szeg\H{o} function \( S_{\dot\mu}(z) \) is analytic and non-vanishing in its domain of definition. Moreover, it holds that
\begin{equation}
\label{szego-jump}
\left\{
\begin{array}{ll}
S_{\dot\mu+}(x) = S_{\dot\mu-}(x)e^{-2\pi\ic c_{\dot\mu,k}}, & x\in(b_k,a_{k+1}), \quad k\in\{1,\ldots,g\}, \medskip \\
|S_{\dot\mu\pm}(x)|^2  = \dot\mu(x), & x\in \cup_{i=1}^g(a_i,b_i).
\end{array}
\right.
\end{equation}
The function \( S_{\dot\mu}^2(z) \) does not depend on the choice of a polynomial \( p(z) \).
\end{proposition}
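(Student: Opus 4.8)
The plan is to reduce all three assertions to a single algebraic identity, namely that the bracketed expression in \eqref{szego} equals $p(z)$ times its $p\equiv1$ counterpart; once this is established, analyticity, $p$-independence, and the jump relations all follow quickly. Write $S_{\dot\mu}(z)=\exp\{(2\pi\ic)^{-1}I(z)\}$ with $I(z)=(w/p)(z)B(z)$, where $B(z)$ denotes the bracket in \eqref{szego}. Since $\exp$ is entire and zero-free, the non-vanishing is automatic and the first assertion amounts to holomorphy of $I(z)$ in $\overline\C\setminus[a_1,b_{g+1}]$.

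The key step is the following reproducing identity: for every polynomial $h$ with $\deg h\le g-1$,
\[
\int_\Delta\log\dot\mu(x)\,h(x)\frac{\dd x}{w_+(x)}=\sum_{i=1}^g2\pi\ic\,c_{\dot\mu,i}\int_{b_i}^{a_{i+1}}\frac{h(y)\,\dd y}{w(y)}.
\]
This is proved by expanding $h=\sum_k\big(\int_{b_k}^{a_{k+1}}h/w\big)l_k$, which is legitimate because \eqref{li} shows that the functionals $h\mapsto\int_{b_k}^{a_{k+1}}h/w$ form a dual basis to $\{l_k\}$ on the $g$-dimensional space of polynomials of degree $\le g-1$, and then invoking the definition \eqref{cmui} of $c_{\dot\mu,i}$ term by term. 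Applying this to $h(x)=(p(x)-p(z))/(x-z)$, a polynomial in $x$ of degree $d-1\le g-1$, together with the splitting $p(x)/(x-z)=h(x)+p(z)/(x-z)$, shows $B(z)=p(z)\Psi(z)$, where
\[
\Psi(z):=\int_\Delta\frac{\log\dot\mu(x)}{x-z}\frac{\dd x}{w_+(x)}-\sum_{i=1}^g2\pi\ic\,c_{\dot\mu,i}\int_{b_i}^{a_{i+1}}\frac{\dd y}{(y-z)w(y)}
\]
is precisely the $p\equiv1$ bracket. Hence $I(z)=w(z)\Psi(z)$, which is manifestly independent of $p$; this already yields the last assertion (in fact it shows $S_{\dot\mu}$ itself, not just its square, is $p$-independent). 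Holomorphy in $\overline\C\setminus[a_1,b_{g+1}]$ is now immediate, since $w$ and $\Psi$ are holomorphic there (the zeros of $p$ have been absorbed), and applying the same identity to $h(x)=x^m$, $m\le g-1$, annihilates the first $g$ Laurent coefficients of $\Psi$ at infinity, so $\Psi(z)=\mathcal O(z^{-g-1})$ and $w\Psi=\mathcal O(1)$ there.

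For the jump relations I would work with $I=w\Psi$ and the Sokhotski--Plemelj formula. Across a gap $(b_k,a_{k+1})$ only the $i=k$ integral in $\Psi$ jumps, while $w$ is single-valued and the $\Delta$-integral is holomorphic there; this gives $I_+-I_-=-(2\pi\ic)^2c_{\dot\mu,k}$ and hence the first line of \eqref{szego-jump}. Across a band $(a_i,b_i)$ only the $\Delta$-integral jumps and $w_+=-w_-$; combining these yields $I_++I_-=2\pi\ic\log\dot\mu(x)$, that is $S_{\dot\mu+}S_{\dot\mu-}=\dot\mu$. To upgrade this product into the modulus identity I would establish the conjugation symmetry $\overline{S_{\dot\mu}(\bar z)}=S_{\dot\mu}(z)$: since $c_{\dot\mu,i}\in\R$ and $l_i,w$ have real coefficients (so $\overline{w(\bar z)}=w(z)$, $\overline{w_+}=-w_+$ on $\Delta$, and $w$ is real on the gaps), one checks directly that $\overline{\Psi(\bar z)}=-\Psi(z)$, whence $\overline{I(\bar z)}=-I(z)$. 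This forces $S_{\dot\mu-}(x)=\overline{S_{\dot\mu+}(x)}$ on each band, and combined with $S_{\dot\mu+}S_{\dot\mu-}=\dot\mu$ it produces $|S_{\dot\mu\pm}(x)|^2=\dot\mu(x)$.

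The only genuinely non-routine point is the reproducing identity and the resulting factorization $B(z)=p(z)\Psi(z)$; once it is in hand, analyticity, the $p$-independence, and the boundary computations are all short, so I would isolate and prove that identity first. A minor technical matter to dispatch along the way is the integrability of the Cauchy kernels near the band endpoints, where $1/w_+$ has a square-root singularity that is integrable against the density $\log\dot\mu$, which is H\"older continuous by \eqref{sobolev}.
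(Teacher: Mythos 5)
Your argument is correct, and although it rests on the same underlying identity as the paper's proof, you deploy it in a way that changes the structure of the argument. The paper's relations \eqref{van1}--\eqref{van2} are exactly your reproducing identity evaluated at \( h(x)=p(x)/(x-z_l)=(p(x)-p(z_l))/(x-z_l) \) and in the Laurent expansion at infinity; the paper uses them only pointwise, to show that the bracket vanishes at the zeroes of \( p(z) \) and has a zero of order \( g+1-d \) at infinity, which gives analyticity but says nothing about \( p \)-independence. For the latter, and for the modulus identity \( |S_{\dot\mu\pm}(x)|^2=\dot\mu(x) \) when \( p(z) \) has non-real coefficients, the paper instead lifts the ratio of two Szeg\H{o} functions to \( \RS \), argues by Liouville that it is constant, and can only pin that constant down to \( \pm1 \) --- which is precisely why the proposition asserts \( p \)-independence only of \( S_{\dot\mu}^2(z) \). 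Your factorization \( B(z)=p(z)\Psi(z) \), obtained by applying the identity to the divided difference \( (p(x)-p(z))/(x-z) \) for \emph{every} \( z \), is a genuine strengthening: it gives \( I(z)=w(z)\Psi(z) \) exactly, so \( S_{\dot\mu}(z) \) itself (not merely its square) is independent of \( p(z) \), the Riemann-surface step becomes unnecessary, and both the conjugation symmetry \( \overline{S_{\dot\mu}(\bar z)}=S_{\dot\mu}(z) \) and the modulus identity reduce to the \( p\equiv1 \) case. The remaining steps --- the dual-basis expansion justified by \eqref{li}, the decay \( \Psi(z)=\mathcal O(z^{-g-1}) \), the Plemelj computations on the gaps and the bands, and the endpoint integrability --- all check out, so the proposal is a complete and slightly sharper proof than the one in the paper.
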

\begin{proof}
Observe that
\begin{equation}
\label{van0}
x^j - \sum_{i=1}^g l_i(x)\int_{b_i}^{a_{i+1}}\frac{y^j\dd y}{w(y)} \equiv 0
\end{equation}
for any \( j\in\{0,\ldots,g-1\} \). Indeed, \eqref{van0} holds with \( x^j,y^j \) replaced by \( l_k(x),l_k(y) \) for any \( k\in\{1,\ldots,g\} \) by the very definition of the polynomials \( l_k(x) \) in \eqref{li}. Since these polynomials are linearly independent, \eqref{van0} follows. Given \( p(z) \) as above,  \eqref{van0} yields that
\begin{equation}
\label{van1}
\frac{p(x)}{x-z_l} - \sum_{i=1}^g l_i(x) \int_{b_i}^{a_{i+1}}\frac{p(y)}{y-z_l}\frac{\dd y}{w(y)} \equiv 0
\end{equation}
for each \( l\in\{1,\ldots,d\} \), and that
\begin{equation}
\label{van2}
\frac{p(x)}{x-z} - \sum_{i=1}^g l_i(x)\int_{b_i}^{a_{i+1}}\frac{p(y)}{y-z}\frac{\dd y}{w(y)} = \mathcal O\left(z^{d-g-1}\right), \quad z\to\infty.
\end{equation}
The first claim of the proposition holds because the term in square brackets in \eqref{szego} vanishes at every zero \( z_l \) of \( p(z) \) by \eqref{van1} and at infinity with order \( g+1-d \) by \eqref{van2} (notice that \( (w/p)(z)=z^{g+1-d}+\mathcal O(z^{g-d})\) as \( z\to\infty \)).  

The well-known boundary behavior of Cauchy integrals, see \cite[Section~I.4]{Gakhov}, yields the first relation in \eqref{szego-jump}. It also follows from \cite[Section~I.4]{Gakhov} that \( S_{\dot\mu+}(x)S_{\dot\mu-}(x)=\dot\mu(x) \). If the polynomial \( p(z) \) has real coefficients, then the obvious conjugate-symmetry implies the second relation in \eqref{szego-jump}. To prove it in general, take a ratio of Szeg\H{o} functions corresponding to two different polynomials (one having real coefficients). Call it \( S(z) \) and lift it to \( \boldsymbol D \). Lift \( S^{-1}(z) \) to \( \boldsymbol D^* \). Since \( S_+S_-\equiv1 \) on \( \Delta \), the lifted function is in fact holomorphic on the whole surface \( \RS \). That is, it is a constant. Because both, \( S(z) \) and \( S^{-1}(z) \), are equal to this constant, the constant is either \( 1 \) or \(-1\). Hence, \( S_{\dot\mu}^2(z) \) is independent of \( p(z) \) and, as relations \eqref{szego-jump} do not change if \( S_{\dot\mu}(z) \) is multiplied by \( -1 \), the second relation in \eqref{szego-jump} follows.
\end{proof}

In \cite[Section~5.2]{Y18}, the above construction was applied directly to the function \( \rho(x) \), see~\eqref{smooth-meas1}--\eqref{smooth-meas2}. However, we use it only for \( \dot\mu(x) \) since we can then leverage the positivity of \( \dot\mu(x) \), it makes the comparison with the case of classical Szeg\H{o} functions (\( g=0 \)) clearer, and because a Szeg\H{o} function of a polynomial can be constructed differently using Riemann theta functions, see~Section~\ref{sec:3.8}. There it will become clear that \eqref{szego} is not the only function which is non-vanishing and has boundary behavior as in \eqref{szego-jump}. However, the ratio of any two such functions is equal to the exponential of a linear combination of holomorphic differentials with coefficients that are integer multiples of \( 2\pi\ic \).

\subsection{Jacobi Inversion Problem}

Recall that a divisor on \( \RS \) is a formal linear combination of points from \( \RS \) with integer coefficients. Denote by \( \mathrm{Jac}(\RS) \) the Jacobi variety of \( \RS \), that is, the set of equivalence classes \( [\vec u] \), \( \vec u\in\C^g \), where \( [\vec u]=[\vec v]\) if and only if \( \vec u - \vec v = \vec j + \mathsf B\vec m \) for some \( \vec j,\vec m\in\Z^g \) and \( \mathsf B \) was defined in \eqref{B}. Abel's map from the divisors of \( \RS \) onto \( \mathrm{Jac}(\RS) \) is defined by
\begin{equation}
\label{am}
\sum n_j\z_j \mapsto \left[\sum \int_{\boldsymbol b_{g+1}}^{\z_j} \vec{\mathcal H}\right], 
\end{equation}
where \( \vec{\mathcal H} := (\mathcal H_1,\ldots,\mathcal H_g)^\mathsf{T} \) is the column vector of the holomorphic differentials on \( \RS \), see \eqref{hol-diff} (since the difference of two paths with the same endpoints is homologous to a linear combination of the cycles of the homology basis with integer coefficients, this map is indeed well-defined). 

Let \( \vec\omega_n:=(\omega_{n,1},\ldots,\omega_{n,g})^\mathsf{T} \) and \( \vec c_{\dot\mu}:=(c_{\dot\mu,1},\ldots,c_{\dot\mu,g})^\mathsf{T} \) be the column vectors of real constants defined in \eqref{omegank} and \eqref{cmui}, respectively. Further, let \( \s_i\in\boldsymbol D^* \) be such that \( \pi(\s_i)\in (b_i,a_{i+1}) \) and is a zero of the polynomial \( m(x) \) from \eqref{smooth-meas2}, \( i\in\{1,\ldots,g\} \). We are interested in the solutions of the following Jacobi inversion problem: \emph{find a divisor \( \mathcal D_n = \sum_{i=1}^g \x_{n,i} \) such that}
\begin{equation}
\label{main-jip}
\left[\sum_{i=1}^g \int_{\boldsymbol b_{g+1}}^{\x_{n,i}} \vec{\mathcal H}\right] = \left[\sum_{i=1}^g \int_{\boldsymbol b_{g+1}}^{\s_i} \vec{\mathcal H} + \vec c_{\dot\mu} + \vec\omega_n\right].
\end{equation}
It is known that \eqref{main-jip} is always solvable and the solution is unique up to a principal divisor (divisor of a rational function on \( \RS \)). That is, if \( \mathcal D_n - \big\{\mbox{ principal divisor }\big\} \) is an effective divisor (all the coefficients are positive), then it also solves \eqref{main-jip}. Immediately one can see that the subtracted principal divisor should have a positive part of degree at most \( g \). As \( \RS \) is hyperelliptic, such divisors come solely from rational functions on \( \overline\C \) lifted to \( \RS \) \cite[Appendix 1]{NutS77}. In particular, such principal divisors are involution-symmetric. Hence, if a solution of \eqref{main-jip}  contains at least one involution-symmetric pair of points, then replacing this pair by another such pair produces a different solution of \eqref{main-jip}. However, if a solution does not contain such a pair, then it solves \eqref{main-jip} uniquely. Hence, in the case where  \eqref{main-jip} has multiple solutions, we denote by \( \mathcal D_n \) the one whose involution-symmetric pairs are all of the form \( \boldsymbol\infty+\boldsymbol\infty^* \), where \(\boldsymbol\infty \) and \( \boldsymbol\infty^* \) are the points on top of infinity in \( \boldsymbol D \) and \( \boldsymbol D^* \), respectively. In particular, if \( k_n \) is the number of involution-symmetric pairs within a given solution of \eqref{main-jip}, then \( \mathcal D_n - k_n\boldsymbol\infty-k_n\boldsymbol\infty^* \) is a common part of every solution of \eqref{main-jip}.

\begin{proposition}
\label{prop:jip}
With an appropriate labeling, solution \( \mathcal D_n = \sum_{i=1}^g \x_{n,i} \) of \eqref{main-jip} is such that \( \pi(\x_{n,i}) \in [b_i,a_{i+1}] \), \( i\in\{1,\ldots,g\} \) (in particular, there are no other solutions). 
\end{proposition}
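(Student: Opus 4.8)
The plan is to realize the gap-supported solutions as the image of Abel's map restricted to the ``real ovals'' of \( \RS \), and to show that this restriction is a homeomorphism onto the connected component of the real locus of \( \mathrm{Jac}(\RS) \) that contains the right-hand side of \eqref{main-jip}. Concretely, recall that \( \ualpha_i=\pi^{-1}([b_i,a_{i+1}]) \), set \( \prod_{i=1}^g\ualpha_i=:\mathbb T^g \) (a real \( g \)-torus), and consider the continuous map \( \Phi:\mathbb T^g\to\mathrm{Jac}(\RS) \), \( (\x_1,\ldots,\x_g)\mapsto\big[\sum_i\int_{\boldsymbol b_{g+1}}^{\x_i}\vec{\mathcal H}\big] \). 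I would first record the real structure of \( \RS \): since all the branch points are real, \( \z=(z,w)\mapsto(\bar z,\bar w) \) is an antiholomorphic involution \( J \) whose fixed-point set consists of the ovals lying over the gaps together with the oval over the unbounded real interval. On each gap \( w \) is real-valued, so every point of \( \ualpha_i \) is fixed by \( J \). Because \( \mathsf B \) is purely imaginary (see the line following \eqref{B}), \( J \) descends through Abel's map to complex conjugation on \( \mathrm{Jac}(\RS) \); hence \( \Phi(\mathbb T^g) \) is contained in the fixed locus of that conjugation, i.e.\ in the real locus of \( \mathrm{Jac}(\RS) \).

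Next I would prove that \( \Phi \) is injective. If \( \Phi(\x)=\Phi(\x') \) for two tuples with \( \x_i,\x_i'\in\ualpha_i \), then the degree-zero divisor \( \sum_i(\x_i-\x_i') \) is principal; as its positive part has degree at most \( g \) and \( \RS \) is hyperelliptic, by \cite[Appendix~1]{NutS77} it is the divisor of a rational function lifted from \( \overline\C \) and is therefore involution-symmetric. But the zeros of such a function occur in involution pairs \( \boldsymbol\zeta+\boldsymbol\zeta^* \) (or doubled ramification points), which is incompatible with a nonzero divisor whose support meets each gap in a single point of a distinct oval; hence \( \sum_i\x_i=\sum_i\x_i' \), and reading off which point lies in which labeled gap gives \( \x=\x' \). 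Equivalently, each such gap divisor is non-special, so it is the unique effective divisor in its linear equivalence class. Injectivity together with invariance of domain (a continuous injection between manifolds of equal dimension \( g \) is open) makes \( \Phi \) an open map, so its image is open; being also compact it is closed, and since \( \mathbb T^g \) is connected the image is a single connected component \( \mathcal T \) of the real locus, onto which \( \Phi \) is a homeomorphism.

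It then remains to check that the right-hand side of \eqref{main-jip} lies in \( \mathcal T \). Here the reality of \( \vec c_{\dot\mu} \) and \( \vec\omega_n \) from \eqref{cmui} and \eqref{omegank} guarantees that adding them keeps a real point real, while the reference points \( \s_i\in\boldsymbol D^* \) lie on the gap ovals and thus contribute to the real locus as well. The delicate point—and the step I expect to be the main obstacle—is the \emph{component matching}: the real locus of \( \mathrm{Jac}(\RS) \) has \( 2^g \) connected components, distinguished by half-period shifts \( \tfrac12\mathsf B\vec m \), \( \vec m\in\Z^g/2\Z^g \), and one must verify that the half-periods accumulated along \( \Phi(\mathbb T^g) \) (coming from the paths that wind around the bands on the chosen sheet, based at \( \boldsymbol b_{g+1} \)) agree modulo \( 2 \) with those produced by the base-point-to-\( \s_i \) integrals on \( \boldsymbol D^* \). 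This is precisely the bookkeeping that dictates the sheet of the \( \s_i \) and the labeling in the statement; once the parities are shown to coincide, \( \pi(\x_{n,i})\in[b_i,a_{i+1}] \) follows from surjectivity of \( \Phi \) onto \( \mathcal T \), and ``no other solutions'' follows from non-speciality of the resulting divisor.
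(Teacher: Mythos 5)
Your route---identifying the image of the product of gap ovals \( \mathbb T^g=\prod_i\ualpha_i \) under Abel's map with one connected component of the real locus of \( \mathrm{Jac}(\RS) \)---is genuinely different from the paper's and is viable: the injectivity of \( \Phi \) via non-specialty of gap divisors (a lifted rational function would need involution-symmetric zeros, impossible when each zero sits alone on a distinct oval and is not a ramification point of even multiplicity), the invariance-of-domain/open-closed argument, and the count of \( 2^g \) components via half-periods \( \tfrac12\mathsf B\vec m \) are all sound for this M-curve. But as written the proposal has a genuine gap at exactly the step where the content of the proposition lives: you defer the ``component matching''---verifying that the right-hand side of \eqref{main-jip} lies in \( \mathcal T=\Phi(\mathbb T^g) \)---with the phrase ``once the parities are shown to coincide.'' Without that verification you have only shown that \emph{if} the target lies in \( \mathcal T \) then the solution is a gap divisor, which is the whole question. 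The gap is closable, and more easily than you suggest: each \( \s_i \) has \( \pi(\s_i)\in(b_i,a_{i+1}) \), so \( \s_i\in\ualpha_i=\pi^{-1}([b_i,a_{i+1}]) \) regardless of the sheet, hence \( \big[\sum_i\int_{\boldsymbol b_{g+1}}^{\s_i}\vec{\mathcal H}\big]=\Phi(\s_1,\ldots,\s_g)\in\mathcal T \); and since \( \mathsf B \) is purely imaginary, each component of the real locus has the form \( \{[\vec v+\tfrac12\mathsf B\vec m]:\vec v\in\R^g\} \) and is therefore invariant under adding the real vectors \( \vec c_{\dot\mu}+\vec\omega_n \). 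No parity computation over paths winding around the bands is actually needed.

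For comparison, the paper avoids the global topology of the real Jacobian altogether. It first uses the reality of \( \vec V_n \) together with the uniqueness-up-to-principal-divisors discussion to conclude \( \mathcal D_n=\sum\bar\x_{n,i} \), then sorts the points of \( \mathcal D_n \) into four classes (on the ``correct'' oval, over \( \overline\R\setminus[a_1,b_{g+1}] \), on a wrong oval, or in a genuine conjugate pair) and computes the Abel image of each class as a real vector plus an explicit half-integer combination of columns of \( \mathsf B \); since \( \mathsf B \) is purely imaginary with independent columns, matching imaginary parts forces the last three classes to be empty and each oval to carry exactly one point. This is a local, purely computational version of your component-matching step; your argument, once completed as above, packages the same information more structurally.
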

\begin{proof}
Recall that \( \pi(\s_i)\in(b_i,a_{i+1}) \), \( i\in\{1,\ldots,g\} \), and observe that \eqref{main-jip} can be rewritten as
\begin{equation}
\label{jip}
\left[\sum_{i=1}^g \int_{\boldsymbol b_i}^{\x_{n,i}}\vec{\mathcal H}\right] = \left[\vec V_n\right], \quad \vec V_n := \vec V + \vec c_{\dot\mu} + \vec\omega_n, \quad \vec V:=\sum_{i=1}^g \int_{[\boldsymbol b_i,\s_i]}\vec{\mathcal H},
\end{equation}
where the path of integration \( [\boldsymbol b_i,\s_i] \) is a ``segment'' on \( \RS \), that is, \( \pi:[\boldsymbol b_i,\s_i]\to[b_i,s_i] \) is a bijection. In this case \eqref{hol-diff} immediately shows that the vector \( \vec V \) and therefore the vectors \( \vec V_n \) have real entries. Denote by \( \bar \z \) the point on the same sheet of \( \RS \) as \( \z \) with \( \pi(\bar\z) = \bar z \) if \( \z\not\in\bd \) and \( \bar\z=\z^* \) if \( \z\in\bd \). Since the polynomials \( l_i(x) \) have real coefficients, it holds that
\[
\int_{\boldsymbol b_i}^{\z}\vec{\mathcal H} = \overline{\int_{\boldsymbol b_i}^{\bar\z}\vec{\mathcal H}}
\]
by \eqref{hol-diff}, where the paths of integration are reflections of each other under the map \( \s \mapsto \bar\s \). Thus, since \( \vec V_n \) is a real vector, if \( \mathcal D_n = \sum \x_{n,i} \) solves \eqref{jip}, so does \( \sum \bar\x_{n,i} \). As explained just before the proposition, it must holds that \( \mathcal D_n=\sum\bar \x_{n,i} \). Now, let us partially fix the labeling of the points of \( \mathcal D_n \). Namely, if a cycle \( \ualpha_k \) contains at least one point of the divisor \( \mathcal D_n \), we label one of these points as \( \x_{n,k} \) and distribute indices \( i \) arbitrarily to the rest of elements of \( \mathcal D_n \). Further, let \( I_1\cup I_2\cup I_3\cup I_4 := \{1,\ldots,g\} \) be disjoint sets such that \( \x_{n,i}\in\ualpha_i \) for \( i\in I_1 \), \( \pi(\x_{n,i})\in\overline\R\setminus[a_1,b_{g+1}] \) for \( i\in I_2 \), \( \x_{n,i}\in\ualpha_j \) for some \( j\neq i \) when \( i\in I_3 \) (notice that necessarily \( j\in I_1 \) in this case), and for each \( i\in I_4 \) there exists \( j\in I_4 \) such that \( \x_{n,j} = \bar \x_{n,i} \). Then \( \vec V_{n,i} := \int_{\boldsymbol b_i}^{\x_{n,i}}\vec{\mathcal H}\in\R^g \) when \( i\in I_1 \). Moreover,
\[
\left[\int_{\boldsymbol b_i}^{\x_{n,i}}\vec{\mathcal H}\right] = \left[\vec V_{n,i} + \frac12\big[\mathsf B\big]_i\right], \quad \vec V_{n,i} := \int_{\boldsymbol a_1}^{\x_{n,i}}\vec{\mathcal H}\in\R^g,
\]
for \( i\in I_2 \), where \( \big[\mathsf B\big]_i \) is the \( i \)-th column of \( \mathsf B \) and the path of integration in the last integral has a natural projection that belongs to \( \overline\R\setminus[a_1,b_{g+1}] \) (it is allowed to pass through points on top of infinity). Further, 
\[
\left[\int_{\boldsymbol b_i}^{\x_{n,i}}\vec{\mathcal H}\right] = \left[\vec V_{n,i} + \frac12\big[\mathsf B\big]_i+\frac12\big[\mathsf B\big]_j\right], \quad \vec V_{n,i} := \int_{[\boldsymbol b_j,\x_{n,i}]}\vec{\mathcal H}\in\R^g,
\]
for \( i\in I_3 \), where \( j \) is such that \( \x_{n,i}\in \ualpha_j \) and one needs to notice that adding rather than subtracting half a column of \( \mathsf B \) does not change the point  on the Jacobi variety as vectors that differ by a linear combination of the columns of \( \mathsf B \) with integer coefficients define the same element of \( \mathrm{Jac}(\RS) \). Next, let \( i,j\in I_4 \) be such that \( \x_{n,i} = \bar \x_{n,j} \). Then
\[
\left[\int_{\boldsymbol b_i}^{\x_{n,i}}\vec{\mathcal H} + \int_{\boldsymbol b_i}^{\x_{n,j}}\vec{\mathcal H}\right] = \left[\vec V_{n,i} + \frac12\big[\mathsf B\big]_i + \frac12\big[\mathsf B\big]_j\right], \quad \vec V_{n,i} = 2\Re\left(\int_{\boldsymbol b_i}^{\x_{n,i}}\vec{\mathcal H}\right).
\]
Denote by \( d_i+1 \) the number of elements of \( \mathcal D_n \) that belong to \( \ualpha_i \) for \( i\in I_1 \), and let \( I_5 \) be the largest subset of \( I_4 \) such that for any \( i\in I_5 \) there exists \( j \in I_4\setminus I_5 \) for which \(\x_{n,i} = \bar \x_{n,j} \) and \(j<i \). Then it follows from \eqref{jip} that
\[
\left[\vec V_n\right] = \left[ \sum_{i\in I_1\cup I_2\cup I_3\cup I_5 } \vec V_{n,i} + \vec U \right], \quad \vec U := \frac12\left(\sum_{i\in I_1}d_i\big[\mathsf B\big]_i + \sum_{i\in I_2\cup I_3\cup I_4}\big[\mathsf B\big]_i \right).
\]
Recall that the entries of \( \mathsf B \) are purely imaginary and that the columns of \( \mathsf B \) are linearly independent. Thus, \( \vec U\in (\ic\R)^g \) and \( [\vec U] = [\vec0 ] \) only if the sets \( I_2=I_3=I_4 =\varnothing \), which then  yields that each \( d_i=0 \). This observation finishes the proof of the proposition.
\end{proof}

Proposition~\ref{prop:jip} did not appear in \cite{Y18} since there are no reasons to believe that solutions of \eqref{main-jip} are confined to a certain subset of  \( \RS^g \) for more general geometries. 

\subsection{Riemann Theta Function}
Let \( \am(\z) \) be a function defined by
\begin{equation}
\label{af}
\am(\z) = \int_{\boldsymbol b_{g+1}}^{\z} \vec{\mathcal H}, \quad \z\in\RS_{\ualpha,\ubeta}:=\RS\setminus\cup_{i=1}^g\{\ualpha_i\cup\ubeta_i\},
\end{equation}
where the path of integration can be any as the domain of definition is simply connected. This function has continuous traces on the cycles of the homology basis (away from the points of intersection of different cycles) that satisfy
\begin{equation}
\label{abel-jump}
\am_+(\s) - \am_-(\s) = \left\{
\begin{array}{rl}
-\mathsf B \vec e_k, & \s\in\ualpha_k, \medskip \\
\vec e_k, & \s\in\ubeta_k,
\end{array}
\right. \quad k\in\{1,\ldots,g\},
\end{equation}
by \eqref{B} and the normalization of \( \vec{\mathcal H} \). 

The theta function associated with \( \mathsf B \) is an entire transcendental function of \( g \) complex variables defined by
\[
\theta\left(\vec u\right) := \sum_{\vec n\in\Z^g}\exp\bigg\{\pi\mathrm{i}\vec n^\mathsf{T}\mathsf B\vec n + 2\pi\mathrm{i}\vec n^\mathsf{T}\vec u\bigg\}, \quad \vec u\in\C^g.
\]
As shown by Riemann, the symmetry of \( \mathsf B \) and positive definiteness of its imaginary part ensures the convergence of the series for any \( \vec u \). It can be directly checked that \( \theta(-\vec u)=\theta(\vec u) \) and it enjoys the following periodicity property:
\begin{equation}
\label{theta-periods}
\theta\left(\vec u + \vec j + \mathsf B\vec m\right) = \exp\bigg\{-\pi \mathrm{i}\vec m^\mathsf{T}\mathsf B \vec m - 2\pi \mathrm{i}\vec m^\mathsf{T}\vec u\bigg\}\theta\big(\vec u\big), \quad \vec j,\vec m\in\Z^g.
\end{equation}
It is also known that \( \theta\left(\vec u\right)=0 \) if and only if \( \big[\vec u-\vec K\big]\) is the image of some  effective divisor of degree \( g-1 \) under Abel's map \eqref{am}, where \( \vec K \) is a fixed  vector known as the vector of Riemann constants (it can be explicitly defined via \( \vec{\mathcal H} \)).

Let \( \vec V=:(V_1,\ldots,V_g)^\mathsf{T} \) and \( \vec V_n=:(V_{n,1},\ldots,V_{n,g})^\mathsf{T} \) be given by \eqref{jip}. Set
\begin{equation}
\label{thetan}
\Theta_n(\z) := \frac{\theta\left(\am(\z) - \vec V_n - \vec K\right)}{\theta\left(\am(\z) - \vec V - \vec K\right)},
\end{equation}
which is a multiplicatively multi-valued meromorphic function on \( \RS \) with a simple zero at each \( \x_{n,i} \), a simple pole at each \( \s_i \), \( i\in\{1,\ldots,g\} \), and otherwise non-vanishing and finite. In fact, it follows from \eqref{abel-jump} and \eqref{theta-periods} that it is holomorphic, non-vanishing, and single-valued in \( \RS_{\ualpha}:=\RS\setminus\cup_{i=1}^g\ualpha_i \) and for \( \s\in\ualpha_k \) it holds that
\begin{equation}
\label{jump2}
\Theta_{n+}(\s)  = \Theta_{n-}(\s) \exp\left\{2\pi\ic\big(V_k - V_{n,k}\big)\right\} = \Theta_{n-}(\s) \exp\left\{-2\pi\ic(c_{\dot\mu,k} +\omega_{n,k})\right\}
\end{equation}
(these traces do vanish at \( \x_{n,i} \)'s and blow up at \( \s_i \)'s). 

The functions \( \Theta_n(\z) \) form a normal family in \( \RS_{\ualpha} \). Indeed, for any subsequence \( \{ n_k \}_k \) there always exists a subsequence \( \{ n_{k_i} \}_i \) along which the vectors \( \vec V_{n_{k_i}} \) converge to some vector \( \vec V_* \) since these vectors form a bounded subset of \( \R^g \) by \eqref{omegank} and \eqref{jip}. Then the corresponding limit point is obtained be replacing \( \vec V_n \) with \( \vec V_* \) in \eqref{thetan} and is holomorphic and non-vanishing in \( \RS_{\ualpha} \) (its zeroes must belong to the \( \ualpha \)-cycles). Hence, according to Montel's theorem, the functions \( \Theta_n(\z) \) are uniformly bounded on closed subsets of \( \RS_{\ualpha} \). Moreover, since their zeroes belong to the \( \ualpha \)-cycles, the above argument also shows that the reciprocals of the functions \( \Theta_n(\z) \) also form a normal family in \( \RS_{\ualpha} \) and therefore the functions themselves are uniformly bounded away from zero on closed subsets of \( \RS_{\ualpha} \). 

Recall the notation \( \bar\z \) introduced after \eqref{jip}. There exists a constant \( c_n \) such that
\begin{equation}
\label{flip-theta}
\overline{\Theta_n(\bar\z)} = c_n\Theta_n(\z), \quad \z\in\RS_{\ualpha}.
\end{equation}
Indeed, functions \( \Theta_n(\z) \) and \( \overline{\Theta_n(\bar\z)} \) are both holomorphic in \( \RS_{\ualpha} \), have the same jumps across the \( \ualpha \)-cycles (the fact that \( \vec c_{\dot\mu}+\vec\omega_n \in\R^g\) is important here), and have the same poles and zeroes (points on the \( \ualpha \)-cycles remain unchanged under the map \(\s\mapsto\bar\s\)). Thus, their ratio is holomorphic on the whole surface \( \RS \) and therefore is a constant.

\begin{proposition}
\label{prop:Tn}
For each natural number \( n \), \( z\in\overline\C\setminus[a_1,b_{g+1}] \), and \( \z\in \boldsymbol D\), let
\begin{equation}
\label{Tn}
T_n(z) := \Theta_n(\z^*)\Theta_n^{-1}(\z).
\end{equation}
Then \( T_n(z) \) is holomorphic and non-vanishing in its domain of definition and
\begin{equation}
\label{Tn-bdry}
\left\{
\begin{array}{ll}
T_{n+}(x) = T_{n-}(x)e^{4\pi\ic(c_{\dot\mu,k} +\omega_{n,k})}, & x\in(b_k,a_{k+1}), \quad k\in\{1,\ldots,g\}, \medskip \\
|T_{n\pm}(x)| \equiv 1, & x\in\Delta.
\end{array}
\right.
\end{equation}
Hence, \( T_n(z) \) can be analytically continued through each gap \( (b_i,a_{i+1}) \) and any such continuation has a simple pole at \( s_i \), a simple pole at \( x_{n,i} \) if \( \x_{n,i}\in \boldsymbol D \), and a simple zero at \( x_{n,i} \) if \( \x_{n,i}\in\boldsymbol D^* \) (if \( \x_{n,i} \) is a ramification point of \( \RS \) then \( T_n(z) \) has a non-zero finite limit at \( x_{n,i} \)).   Moreover, the families \( \big\{T_n(z)\big\}_n \) and \( \big\{T_n(z)^{-1}\big\}_n \) are normal in  \( \overline\C\setminus[a_1,b_{g+1}] \).
\end{proposition}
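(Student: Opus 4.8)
\emph{Overall approach.} I would deduce every assertion from the corresponding property of \( \Theta_n \) on \( \RS \), transported to the \( z \)-plane through the involution \( \z\mapsto\z^* \). For the holomorphy and non-vanishing, observe that a point \( z\in\overline\C\setminus[a_1,b_{g+1}] \) lies neither in \( \Delta \) nor in any closed gap \( [b_k,a_{k+1}] \); since \( \ualpha_k=\pi^{-1}([b_k,a_{k+1}]) \) and \( \bd=\pi^{-1}(\Delta) \), both lifts \( \z\in\boldsymbol D \) and \( \z^*\in\boldsymbol D^* \) then lie in \( \RS_\ualpha \), where by the discussion after \eqref{thetan} the function \( \Theta_n \) is holomorphic and non-vanishing. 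Hence so is their ratio \( T_n(z) \), which settles the first claim.

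\emph{Boundary behavior.} For the modulus on \( \Delta \), I would use that \( \bar\z=\z^* \) when \( \z\in\bd \), so that \eqref{flip-theta} reads \( \overline{\Theta_n(\z^*)}=c_n\Theta_n(\z) \) and gives \( |\Theta_n(\z^*)|=|c_n|\,|\Theta_n(\z)| \); applying \eqref{flip-theta} twice forces \( |c_n|=1 \), whence \( |T_n|\equiv1 \) on \( \bd \). Letting \( z\to x\in\Delta \) from either side sends \( \z,\z^* \) to the two involution-related boundary points over \( x \), yielding \( |T_{n\pm}(x)|\equiv1 \). For the jump across a gap \( (b_k,a_{k+1}) \), the lifts \( \z,\z^* \) sit on the two arcs of \( \ualpha_k \), across which \( \Theta_n \) has the constant jump \eqref{jump2}. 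Writing \( \gamma_k:=c_{\dot\mu,k}+\omega_{n,k} \) and letting \( z \) cross the gap, \( \z \) crosses the \( \boldsymbol D \)-arc (oriented away from \( \boldsymbol b_k \), i.e.\ with increasing \( x \)) while \( \z^* \) crosses the \( \boldsymbol D^* \)-arc (oriented oppositely); tracking both orientations, each of the factors \( \Theta_n(\z) \) and \( \Theta_n(\z^*) \) contributes \( e^{2\pi\ic\gamma_k} \) to \( T_{n+}/T_{n-} \), giving \( T_{n+}(x)=T_{n-}(x)e^{4\pi\ic\gamma_k} \). This orientation bookkeeping — fixing the two arcs of \( \ualpha_k \) and the corresponding upper/lower sides in the \( z \)-plane so that the two contributions reinforce rather than cancel — is the one genuinely delicate point of the whole argument, and I expect it to be the main obstacle to a clean write-up.

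\emph{Continuation and singularities.} Since the multiplicative jump \( e^{4\pi\ic\gamma_k} \) is a nonzero constant, \( \log T_n \) has a constant additive jump across the open gap, so \( T_n \) has no branch point in its interior and \( T_n|_{\Im z>0} \) continues analytically across it (any such continuation differing from \( T_n|_{\Im z<0} \) by that constant). The singularities of such a continuation are then inherited from those of \( \Theta_n \) on \( \ualpha_k \): the pole of \( \Theta_n \) at \( \s_k\in\boldsymbol D^* \) (a numerator) produces a simple pole of \( T_n \) at \( s_k \); the zero of \( \Theta_n \) at \( \x_{n,k} \) produces a simple pole at \( x_{n,k} \) when \( \x_{n,k}\in\boldsymbol D \) (a denominator) and a simple zero when \( \x_{n,k}\in\boldsymbol D^* \); and when \( \x_{n,k} \) is a ramification point, a local coordinate \( t \) with \( z=\pi(\x_{n,k})+t^2 \) gives \( \Theta_n(\z)\sim ct \) and \( \Theta_n(\z^*)\sim -ct \), so \( T_n \) tends to the finite nonzero limit \( -1 \). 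Simplicity throughout follows from the simplicity of the zeros and poles of \( \Theta_n \) recorded after \eqref{thetan}, together with Proposition~\ref{prop:jip} locating \( \pi(\x_{n,k}) \) in \( [b_k,a_{k+1}] \).

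\emph{Normality.} Finally, for a compact \( \mathcal K\subset\overline\C\setminus[a_1,b_{g+1}] \) the set \( \{\z\in\boldsymbol D:\pi(\z)\in\mathcal K\} \) and its involution are compact subsets of \( \RS_\ualpha \). By the normality of \( \{\Theta_n\} \) and \( \{\Theta_n^{-1}\} \) established just before the proposition, \( \Theta_n(\z^*) \) and \( \Theta_n(\z) \) are bounded above and bounded away from zero uniformly in \( n \) over \( \mathcal K \); hence so are \( T_n \) and \( T_n^{-1} \), and Montel's theorem yields normality of both families in \( \overline\C\setminus[a_1,b_{g+1}] \).
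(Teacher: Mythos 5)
Your proof is correct and follows essentially the same route as the paper: every claim is read off from the corresponding property of \( \Theta_n(\z) \), with \eqref{flip-theta} supplying the unimodularity on \( \Delta \) and \eqref{jump2} the jump across the gaps. The only (cosmetic) difference is that the paper derives \( |T_{n\pm}|\equiv1 \) by combining \( T_n(z)=\overline{T_n(\bar z)} \) with \( T_{n\pm}=1/T_{n\mp} \) on \( \Delta \), whereas you extract \( |c_n|=1 \) directly and use \( \bar\z=\z^* \) on \( \bd \) — both hinge on the same symmetry.
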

\begin{proof}
The properties of the functions \( T_n(z) \) follow immediately from the corresponding properties of the functions \( \Theta_n(\z) \) except for the second identity of \eqref{Tn-bdry}. To derive it, notice that on the one hand \( T_n(z) = \overline{T_n(\bar z)}\) by \eqref{flip-theta}  and therefore \( T_{n\pm}(x) =\overline{T_{n\mp}(x)}\) for \( x\in\Delta \), and on the other, \( T_{n\pm}(x) = 1/T_{n\mp}(x) \), \( x\in\Delta \), by the very definition \eqref{Tn}. These two equalities together do yield the desired claim. %Finally, since  \( T_n(z) \) is holomorphic and non-vanishing in a neighborhood of the complement of the unit disk, the winding number of \( T_n(z) \) on the unit circle is zero.
\end{proof}

\subsection{Szeg\H{o} Functions of Polynomials}
\label{sec:3.8}
Take \( q(z)=z-e \), \( e\not\in\Delta \). As usual, let \( \e\in\boldsymbol D^* \) be such that \( \pi(\e)=e \) and let \( \am(\e) \) be given by \eqref{af}, where with a slight abuse of notation we set \( \am(\e) := \am_+(\e) \) if \( \e\in\cup_{i=1}^g\ualpha_i \). Define 
\[
S_e(\z) := \frac{\theta\left(\am(\z) - \am(\boldsymbol\infty^*) - \vec K\right)}{\theta\left(\am(\z) - \am(\e) - \vec K\right)} \left\{
\begin{array}{rl}
1, & \z\in\boldsymbol D, \smallskip \\
q(z), & \z\in\boldsymbol D^*,
\end{array}
\right.
\]
where, as before, \( \boldsymbol\infty \) and \( \boldsymbol\infty^* \) are the points on top infinity that belong to \( \boldsymbol D \) and \( \boldsymbol D^* \), respectively. This function is holomorphic and non-vanishing on \( \RS_{\ualpha}\setminus\bd \) (notice that both the numerator and the denominator of the fraction above have a zero of order \( g-1 \) at \( \boldsymbol b_{g+1} \) as can be easily seen from \eqref{am} and \eqref{af}), whose traces on \(\bd \) and the \( \ualpha \)-cycles satisfy 
\[
S_{e+}(\s)=S_{e-}(\s)
\left\{
\begin{array}{rl}
q(s), & \s\in\bd, \smallskip \\
e^{2\pi\ic (\am_k(\e)-\am_k(\boldsymbol\infty^*))}, & \s\in\ualpha_k, \;\; k\in\{1,\ldots,g\},
\end{array}
\right.
\]
where \( \am_k(\z) \) is the \( k \)-th component of the vector \( \am(\z) \) and we exclude the points of intersection of different cycles. It is quite straightforward to see that the product \( S_e(\z)S_e(\z^*) \) is continuous across both \( \bd \) and the \( \ualpha \)-cycles and therefore is an entire function on the whole surface. Thus, \( S_e(\z)S_e(\z^*) \equiv S_e^2(\boldsymbol a_1) \) on \( \RS \). Hence, if we put \( S_q(z) := S_e(\z)/S_e(\boldsymbol a_1) \), \( \z\in\boldsymbol D \), then \( S_q(z) \) is holomorphic and non-vanishing in \( \overline\C\setminus[a_1,b_{g+1}] \) and satisfies
\[
\left\{
\begin{array}{ll}
S_{q+}(x)S_{q-}(x)=q(x), & x\in\Delta, \smallskip \\
S_{q+}(x)=S_{q-}(x)e^{2\pi\ic (\am_k(\e)-\am_k(\boldsymbol\infty^*))}, & x\in(b_k,a_{k+1}), \;\; k\in\{1,\ldots,g\}.
\end{array}
\right.
\]
Now, if \( e\in(-\infty,\infty)\setminus[a_1,b_{g+1}] \), one can see from \eqref{hol-diff} and \eqref{af} that the constant \( \am_k(\e)-\am_k(\boldsymbol\infty^*) \) is real. In this case, similarly to \eqref{flip-theta}, one can check that  \( \overline{S_e(\bar\z)} = c_e S_e(\z) \) and therefore \( S_e(\z)/S_e(\boldsymbol a_1) = \overline{S_e(\bar\z)}/\overline{S_e(\boldsymbol a_1)} \). Hence, it additionally holds that \( |S_{q\pm}(x)|^2=q(x)\) for \( x\in\Delta \). In the same way one can also check that the last equality also holds when \( q(z) = (z-e)(z-\bar e) \) (in this case we define \( S_q(z) \) as a product of \( S_{\cdot-e}(z) \) and \( S_{\cdot-\bar e}(z) \)).

The non-uniqueness of the above construction stems from the vectors \( \am(\boldsymbol\infty^*) \) and \( \am(\e) \) as they can be replaced by any pair \( \vec u \) and \( \vec v \) such that \( [\am(\boldsymbol\infty^*)] = [\vec u] \) and \( [\am(\e)]=[\vec v] \). Such a substitution will lead to a different function \( S_q(z) \) with the same jump relation on \( \Delta \), but different constant jumps in the gaps. Clearly, the ratio of two such Szeg\H{o} functions is equal to a constant multiple of \( \exp\{2\pi\ic {\vec m}^{\mathsf T} \am(\z)\} \) for some \( \vec m\in\Z^g \) as can be seen from \eqref{theta-periods}.

The above construction also leads to a  formula for \( S_n(\z) \) different from \eqref{Sn}. Indeed, up to multiplication by a constant multiple of \( \exp\{2\pi\ic {\vec m}^{\mathsf T} \am(\z)\} \), \( \vec m\in\Z^g \), \( S_n(\z) \) is equal to
\[
\left(\prod_{e\in E_n,~|e|<\infty}S_e(\z)\right)\left(\frac{\theta\left(\am(\z) - \am(\boldsymbol\infty^*) - \vec K\right)}{\theta\left(\am(\z) - \am(\boldsymbol\infty) - \vec K\right)}\right)^n.
\]

\subsection{Main Theorem}

Recall \eqref{w} and Propositions~\ref{prop:psin}--\ref{prop:Tn}. An analog of Theorem~\ref{thm:M1-MP} in the multi-cut case can be stated as follows.

\begin{theorem}
\label{thm:Mg-MP}
Let \( f(z) \) be given by \eqref{markov} and \eqref{smooth-meas1}--\eqref{sobolev}. Further, let \( p_n(z)/q_n(z) \) be the multipoint Pad\'e approximant of \( f(z) \) associated with \( E_n \), where the interpolation sets \( E_n \) are conjugate-symmetric and there exists a neighborhood of \( [a_1,b_{g+1}]\) disjoint from all the multisets \( E_n \). Then it holds that
\[
f(z)-\frac{p_n(z)}{q_n(z)} = (2+o(1))(T_n\psi_n)(z)\frac{\big(mS_{\dot\mu}^2\big)(z)}{w(z)},
\]
as \( n\to \infty \) locally uniformly in \( \overline\C\setminus[a_1,b_{g+1}] \). 
\end{theorem}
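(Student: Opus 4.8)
The plan is to deduce the stated asymptotics from the main strong-asymptotic theorem of \cite{Y18} and to recast its conclusion in terms of the explicit functions \( \psi_n \), \( T_n \), \( m \), and \( S_{\dot\mu} \) constructed above. That theorem provides the leading behavior of the remainder \( R_n(z)=(q_nf-p_n)/v_n \) of \eqref{Rn}, equivalently of the error \( f-p_n/q_n=v_nR_n/q_n \), in terms of the auxiliary objects introduced in \cite{Y18}: the function \( S_n(\z) \) of \eqref{Sn}, a quotient of Riemann theta functions playing the role of \( \Theta_n \), and a Szeg\H{o} function built directly from the full density \( \rho \). The entire task is therefore one of translation: to express each of these factors through the quantities natural to \( \bar H^2_\R \)-approximation.

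For the first factor I would use \eqref{psin-sn}, which already identifies the combination \( v_nS_n(\z^*)S_n^{-1}(\z) \) on \( \boldsymbol D \) with \( \psi_n(z) \); thus the products of \( S_n \)-values collapse to the single Blaschke-type function \( \psi_n \) of Proposition~\ref{prop:psin}. For the theta quotient I would invoke the definition \eqref{Tn}, \( T_n(z)=\Theta_n(\z^*)\Theta_n^{-1}(\z) \), together with Proposition~\ref{prop:Tn}. Finally, writing \( \rho=\dot\mu\,m \) as in \eqref{smooth-meas2}, the Szeg\H{o} function of \( \rho \) used in \cite{Y18} is replaced by \( m(z)S_{\dot\mu}^2(z) \): the density \( \dot\mu \) is carried by \( S_{\dot\mu} \) via Proposition~\ref{prop:szego}, while the sign-fixing polynomial \( m \) appears bare, and any remaining discrepancy between the two Szeg\H{o} normalizations is, by the remark following Proposition~\ref{prop:szego}, an exponential of holomorphic differentials with integer periods, which is absorbed into \( T_n \).

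The correctness of this bookkeeping is confirmed by two consistency checks, which I would carry out explicitly. First, the product \( T_n(z)\psi_n(z)\,m(z)S_{\dot\mu}^2(z)/w(z) \) must be single-valued and analytic in \( \overline\C\setminus\Delta \), as the left-hand side is. Across a gap \( (b_k,a_{k+1}) \) the polynomial \( m \) and \( 1/w \) are analytic, while by \eqref{psin-bdry}, \eqref{Tn-bdry}, and \eqref{szego-jump} the three remaining factors pick up the multiplicative jumps \( e^{-4\pi\ic\omega_{n,k}} \), \( e^{4\pi\ic(c_{\dot\mu,k}+\omega_{n,k})} \), and \( e^{-4\pi\ic c_{\dot\mu,k}} \), whose product is \( 1 \); hence the combination indeed continues analytically through every gap. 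Second, to fix the leading constant I would match the additive jump across \( \Delta \). Since \( f(z)=\int(z-x)^{-1}\dd\mu(x) \) and \( \dd\mu=-(\pi\ic)^{-1}\rho\,\dd x/w_+ \) on \( \Delta \), the error has jump \( f_+-f_-=2\rho/w_+=2\dot\mu\,m/w_+ \) there, whereas a direct computation of boundary values --- using \( S_{\dot\mu+}S_{\dot\mu-}=\dot\mu \) from the proof of Proposition~\ref{prop:szego}, the relations \( \psi_{n+}\psi_{n-}=T_{n+}T_{n-}\equiv1 \) on \( \Delta \) coming from \( |\psi_{n\pm}|=|T_{n\pm}|\equiv1 \) and conjugate symmetry, and \( w_+=-w_- \) --- shows that \( T_n\psi_n mS_{\dot\mu}^2/w \) has additive jump \( \dot\mu\,m/w_+ \), exactly one half of that of \( f \). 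This forces the leading coefficient to be \( 2 \), in agreement with the genus-zero Theorem~\ref{thm:M1-MP}.

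The genuinely delicate point is the uniformity of the \( o(1) \) term as \( n\to\infty \). In the Riemann--Hilbert steepest-descent analysis underlying \cite{Y18}, the factors \( T_n \) and \( \psi_n \) make up the global parametrix on the genus-\( g \) surface \( \RS \), and the error term is controlled only as long as this parametrix does not degenerate, that is, as long as the theta quotient stays uniformly bounded and bounded away from zero on compact subsets of \( \overline\C\setminus[a_1,b_{g+1}] \). This is precisely guaranteed here by the normality of both \( \{T_n\}_n \) and \( \{T_n^{-1}\}_n \) established in Proposition~\ref{prop:Tn}, together with \( |\psi_n|<1 \) off \( \Delta \) from Proposition~\ref{prop:psin}; and the moving zeros \( \x_{n,i} \) of the Jacobi inversion divisor remain confined to the gaps \( [b_i,a_{i+1}] \) by Proposition~\ref{prop:jip}, hence bounded away from \( \Delta \), so the local parametrices at the band edges are unaffected and contribute only lower-order corrections. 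Assembling these estimates on compact subsets of \( \overline\C\setminus[a_1,b_{g+1}] \) yields the claimed locally uniform asymptotics.
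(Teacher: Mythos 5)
Your overall strategy is the paper's: quote the strong asymptotics of \cite{Y18} and translate them into the functions \( \psi_n \), \( T_n \), \( m \), \( S_{\dot\mu} \) via \eqref{psin-sn} and \eqref{Tn}. But two of your steps do not hold up. First, the ``consistency check'' by which you claim to fix the constant \( 2 \) is wrong on both counts: the error \( f-r_n \) does \emph{not} inherit the additive jump \( 2\rho/w_+ \) of \( f \) across \( \Delta \), because the poles of \( r_n \) lie on \( \Delta \); and the boundary values of the right-hand side do not sum to \( \dot\mu m/w_+ \). Indeed, on \( \Delta \) one has \( T_{n-}\psi_{n-}S_{\dot\mu-}^2=\dot\mu^2/(T_{n+}\psi_{n+}S_{\dot\mu+}^2) \) with \( |T_{n+}\psi_{n+}S_{\dot\mu+}^2|=\dot\mu \), so the additive jump of \( T_n\psi_n mS_{\dot\mu}^2/w \) equals \( 2\dot\mu m\cos\theta_n(x)/w_+ \) for an oscillating phase \( \theta_n(x) \) --- not a fixed half of anything. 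The constant \( 2 \) cannot be obtained this way; in the paper it is read off directly from the formula \( (wR_n)(z)=\gamma_n(2+o(1)+o(1)\Upsilon_n(\z^*))\Psi_n(\z^*) \) of \cite[Theorem~3.7]{Y18}, after combining it with \( q_n(z)=\gamma_n(1+o(1)+o(1)\Upsilon_n(\z))\Psi_n(\z) \) and \( f-p_n/q_n=v_nR_n/q_n \).

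Second, the translation itself is left vague at the two places where actual work is required. The discrepancy between the Szeg\H{o} function of \( \rho \) used in \cite{Y18} and your \( mS_{\dot\mu}^2 \) is not ``absorbed into \( T_n \)'' by some generic period argument: the paper constructs \( \Psi_n(\z)=(S_n\Theta_n)(\z)\cdot\{1/S_{\dot\mu},\, mS_{\dot\mu}\} \) and checks that it has precisely the divisor and the jump \( \Psi_{n+}=\Psi_{n-}(v_n/\rho) \) on \( \bd \) demanded by \cite[Proposition~3.3]{Y18}; this works only because the Jacobi inversion problem \eqref{main-jip} was posed with the data \( \vec c_{\dot\mu} \) and the points \( \s_i \) lying over the zeros of \( m \), so that the poles of \( \Theta_n \) cancel the zeros of \( m \) on \( \boldsymbol D^* \). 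You need to exhibit this verification, not assert it. Finally, \cite[Theorem~3.7]{Y18} gives \eqref{asymp1} only locally uniformly in \( \overline\C\setminus\Delta \), with error terms \( o(1)\Upsilon_n \) whose poles are exactly the \( \x_{n,i} \) wandering in the closed gaps (they are \emph{not} bounded away from \( \Delta \), since \( [b_i,a_{i+1}] \) meets \( \Delta \) at its endpoints). Passing to asymptotics that are locally uniform in \( \overline\C\setminus[a_1,b_{g+1}] \), i.e.\ uniformly through the gaps, requires the additional argument the paper gives: normality of \( \{\Upsilon_n\} \), the fact that \( o(1)\Upsilon_n \) is holomorphic off \( \Delta \) and vanishes at \( \boldsymbol\infty \), and the maximum modulus principle. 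Your final paragraph invokes normality of \( T_n \) and \( |\psi_n|<1 \), which control the parametrix but not these error terms, so the locally uniform claim in the gaps is not established as written.
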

Recall that \( \psi_n(z) \) satisfies \( |\psi_n(z)|<1 \), \( z\in\overline\C\setminus\Delta \), and that it has \( 2n \) zeroes there. Since these zeroes are separated from \( \Delta \) for all \( n \), the functions \( \psi_n(z) \) converge to zero geometrically fast in \( \overline\C\setminus\Delta \). Recall also that the functions \( T_n(z) \) are uniformly bounded away from infinity and zero on closed subsets of \( \overline\C\setminus[a_1,b_{g+1}] \).
\begin{proof}
Let \( S_n(\z) \), \( S_{\dot\mu}(z) \), and \( \Theta_n(\z) \) be given by \eqref{Sn}, \eqref{szego}, and \eqref{thetan}, respectively. Define
\begin{equation}
\label{Psin}
\Psi_n(\z) := (S_n\Theta_n)(\z) \left\{
\begin{array}{rl}
1/S_{\dot\mu}(z), & \z\in\boldsymbol D, \smallskip \\
(mS_{\dot\mu})(z), & \z\in\boldsymbol D^*.
\end{array}
\right.
\end{equation}
It holds that \( \Psi_n(\z) \) is a meromorphic function in \( \RS\setminus\bd \) with a pole of order \( n \) at \( \boldsymbol\infty \), a zero or order \( n-g \) at \( \boldsymbol\infty^* \), a simple zero at each \( \x_{n,i} \), \( i\in\{1,\ldots,g\} \), and otherwise non-vanishing and finite. Moreover, it has continuous traces on \( \bd \) that satisfy
\begin{equation}
\label{Psin-jump}
\Psi_{n+}(\s) = \Psi_{n-}(\s)(v_n/\rho)(x), \quad \s\in\bd,
\end{equation}
by \eqref{smooth-meas2}, \eqref{Sn-jump},  \eqref{szego-jump}, and \eqref{jump2}. Hence, \( \Psi_n(\z) \) is exactly the function from \cite[Proposition~3.3]{Y18}. Further, let \( \vec W_n := \vec V_n + \am(\boldsymbol\infty^*) - \am(\boldsymbol\infty) \), see \eqref{jip}. Define
\begin{equation}
\label{upsilon}
\Upsilon_n(\z) := \frac{\theta\left(\am(\z) - \am(\boldsymbol\infty) - \vec K\right)}{\theta\left(\am(\z) - \am(\boldsymbol\infty^*) - \vec K\right)}\frac{\theta\left(\am(\z) - \vec W_n - \vec K\right)}{\theta\left(\am(\z) - \vec V_n - \vec K\right)}.
\end{equation}
Then it follows from \eqref{jip} and \eqref{theta-periods} that \( \Upsilon_n(\z) \) is a rational function on \( \RS \) with  poles only at \( \x_{n,1},\ldots,\x_{n,g},\boldsymbol\infty^* \), which are simple, and a zero at \( \boldsymbol\infty \) (the rest of \( g \) zeroes are uniquely determined by the corresponding Jacobi inversion problem). This is exactly the function from \cite[Proposition~3.4]{Y18}. Observe also that an argument virtually similar to the one after \eqref{jump2} shows that the functions \( \Upsilon_n(\z) \) form a normal family in \( \RS_{\ualpha}\setminus\{\boldsymbol\infty^*\} \).

Let \( \gamma_n \) be a constant such that \( \lim_{\z\to\boldsymbol\infty}\gamma_nz^{-n}\Psi_n(\z) = 1 \). Recall also the definition of  \( R_n(z) \) in \eqref{Rn}. It was shown in \cite[Theorem~3.7]{Y18} that
\begin{equation}
\label{asymp1}
\left\{
\begin{array}{rcl}
q_n(z) &=& \gamma_n \big(1+o(1)+o(1)\Upsilon_n(\z)\big)\Psi_n(\z), \medskip \\
(wR_n)(z) &=& \gamma_n\big(2+o(1)+o(1)\Upsilon_n(\z^*)\big)\Psi_n(\z^*),
\end{array}
\right.
\end{equation}
locally uniformly in \( \overline\C\setminus\Delta \), where \( \z\in\boldsymbol D \) and the error terms \( o(1) \) vanish at infinity. The normality of \( \Upsilon_n(\z) \) and the fact that each of them has a simple pole at \( \boldsymbol\infty^* \) while the error functions are holomorphic and vanish at infinity coupled with the maximum modulus principle show that we can write
\begin{equation}
\label{asymp2}
\left\{
\begin{array}{rcl}
q_n(z) &=& \gamma_n \big(1+o(1)\big) \Psi_n(\z), \medskip \\
(wR_n)(z) &=& \gamma_n\big(2+o(1)\big)\Psi_n(\z^*),
\end{array}
\right.
\end{equation}
locally uniformly in \( \overline\C\setminus[a_1,b_{g+1}] \). The desired claim now follows from the very definitions of \( R_n(z) \) and \( \Psi_n(\z) \), \eqref{psin-sn}, and \eqref{Tn}.
\end{proof}

\section{\( \bar H_\R^2 \)-approximation of Markov Functions On Several Intervals}

Assume now in addition to \eqref{smooth-meas1}--\eqref{sobolev} that \( \Delta\subset(-1,1) \).

\subsection{Auxiliary Polynomials}
Given \( w(z) \) as in \eqref{w}, set
\begin{equation}
\label{w-tilde}
\tilde w(z) := z^{g+1}w(1/z),
\end{equation}
which is holomorphic in \( \C\setminus\Delta^{-1} \) and normalized so that \( \tilde w (0) =1 \) (in particular, \( \tilde w(x)>0 \) for \( x\in(-1,1) \)). Let \( \mathsf W \) be the following matrix:
\begin{equation}
\label{matW}
\mathsf W := \left[\int_{b_i}^{a_{i+1}}\big(x+x^{-1}\big)^l\frac{x^g\dd x}{(w\tilde w)(x)}\right]_{i=1,l=1}^{g,g},
\end{equation}
where \( i \) is the row index and \( l \) is the column one (powers \( l \) do go up to \( g \) but exclude \( 0 \)). Let us abbreviate \( y_l=x_l+x_l^{-1} \). Then
\[
\det(\mathsf W) = \int_{b_1}^{a_2}\cdots\int_{b_g}^{a_{g+1}} V\big(y_1,\ldots,y_g\big)\frac{(y_g\cdots y_1)(x_g^g\cdots x_1^g)}{(w\tilde w)(x_g)\cdots (w\tilde w)(x_1)}\dd x_g\cdots \dd x_1,
\]
where, as before,  \( V(y_1,\ldots,y_g) \) is the Vandermonde determinant.

If \( 0\in \Delta \), then it is clear that \( \det(\mathsf W)\neq 0 \) since all the gaps \( (b_k,a_{k+1}) \) are disjoint and the Jukovski map \( x+x^{-1}\) preserves this property. In this case we can set
\begin{equation}
\label{elli1}
\ell_j(x) := x^g\sum_{i=1}^g \ell_{ij} \big( x+x^{-1} \big)^i = h_j \prod_{i=1}^g(x-x_{ij})(1-xx_{ij}), 
\end{equation}
where \( [\ell_{ij}]_{i,j=1}^g = \mathsf W^{-1} \) and we agree that \( |x_{ij}|\leq1\). Then these polynomials satisfy
\begin{equation}
\label{elli2}
\int_{b_k}^{a_{k+1}}\frac{\ell_j(x)\dd x}{(w\tilde w)(x)} = \delta_{kj}, \quad k,j\in\{1,\ldots,g\}.
\end{equation}
Since each \( \ell_j(x) \) has real coefficients, it has at least one zero in each gap \( (b_k,a_{k+1}) \), \( k\neq j \). 

To see that polynomials satisfying \eqref{elli2} exist even if \( 0 \not\in\Delta \), observe that they can be carried forward and backward by M\"obius transformations. Indeed, let \( \ell(t)=h\prod_{i=1}^g(t-t_i)(1-tt_i) \) be a symmetric polynomial of degree \( 2g \) and \( t(x) = (x-x_0)/(1-xx_0) \) be a M\"obius transformation that carries a system of intervals \( \Delta_1 \) into another system \( \Delta_2 \). Then
\[
\ell(t) = h\prod_{i=1}^g\left(\frac{1-x_0^2}{1-x_ix_0}\right)^2\frac{\prod_{i=1}^g(x-x_i)(1-xx_i)}{(1-xx_0)^{2g}},
\]
where \( t_i=t(x_i) \). Let \( w_{\Delta_i}(x) \) and \( \tilde w_{\Delta_i}(x) \) be defined via \eqref{w} and \eqref{w-tilde}, respectively, with respect to the system of intervals \( \Delta_i \). Then
\[
(w_{\Delta_2}\tilde w_{\Delta_2})(t) = \prod_{k=1}^{g+1}\frac{(1-x_0^2)^2}{(1-a_kx_0)(1-b_kx_0)}\frac{(w_{\Delta_1}\tilde w_{\Delta_1})(x)}{(1-xx_0)^{2g+2}}.
\]
Since \( \dd t = (1-x_0^2)(1-xx_0)^{-2}\dd x \), it holds that
\[
\int_{t(c)}^{t(d)}\frac{\ell(t)\dd t}{(w_{\Delta_2}\tilde w_{\Delta_2})(t)} = \int_c^d\frac{\ell_*(x)\dd x}{(w_{\Delta_1}\tilde w_{\Delta_1})(x)},
\]
where \( \ell_*(x) := h_* \prod_{i=1}^g(x-x_i)(1-xx_i) \) and \( h_* := h (1-x_0^2)\prod_{k=1}^{g+1} \frac{(1-a_kx_0)(1-b_kx_0)}{(1-x_{k-1}x_0)^2} \). Then, by applying  to \( \Delta \) a M\"obius transformation that sends a point from \( \Delta \) to the origin, constructing polynomials satisfying \eqref{elli1} and \eqref{elli2} for the image of \( \Delta \), and carrying them back by the inverse M\"obius transformation, we shall construct the desired polynomials \( \ell_i(x) \) for \( \Delta \).

\subsection{Condenser Map}

Let \( u(x) \) be a symmetric polynomial of degree \( 2g \) such that
\begin{equation}
\label{phi-norm}
\int_\Delta \frac{u(x)\dd x}{(w_+\tilde w)(x)} = -\ic \qandq \int_{b_k}^{a_{k+1}}\frac{u(x)\dd x}{(w\tilde w)(x)} = 0, \;\; k\in\{1,\ldots,g\}.
\end{equation}
It can be readily checked that
\begin{equation}
\label{U}
u(x) = u_0\left(x^g - \sum_{i=1}^g\left(\int_{b_i}^{a_{i+1}}\frac{y^g\dd y}{(w\tilde w)(y)}\right)\ell_i(x)\right)  = c\prod_{i=1}^g(x-x_i)(1-x_ix),
\end{equation}
where the constant \( c \) (and therefore \( u_0 \)) is chosen so that the first condition in \eqref{phi-norm} is fulfilled and we agree that \( |x_i|\leq 1 \), \( i\in\{1,\ldots,g\} \). Observe that each gap \( (b_k,a_{k+1}) \) must contain exactly one zero of of \( u(x) \), say \( x_k \). In particular, since \( w_+(x)=(-1)^{g+1-k}\ic |w(x)| \) for \( x\in(a_k,b_k) \), it holds that \( (-1)^{g+1-k}u(x)>0 \) for \( x\in(a_k,b_k) \), \( k\in\{1,\ldots,g+1\} \). 

\begin{proposition}
\label{prop:varphi}
Define
\begin{equation}
\label{varphi}
\varphi(z) := \exp\left\{\pi\int_1^z\frac{u(s)\dd s}{(w\tilde w)(s)}\right\},
\end{equation}
where the path of integration lies entirely in \( \overline\C\setminus\big([a_1,b_{g+1}]\cup[a_1,b_{g+1}]^{-1}\big) \). The function \( \varphi(z) \) is well-defined and holomorphic in its domain of definition and satisfies
\begin{equation}
\label{varphi-sym}
\varphi(\bar z) = \overline{\varphi(z)} \qandq \varphi(1/z) = 1/\varphi(z).
\end{equation}
Moreover, the increment of the argument of \( \varphi(z) \) along the unit circle is equal to \( 2\pi \) and it holds that
\begin{equation}
\label{varphi-prop}
\left\{
\begin{array}{ll}
|\varphi(\tau)|\equiv1, & \tau\in\T, \smallskip \\
|\varphi(x)|\equiv\rho^{\pm1}, & x\in\Delta^{\pm1}, \;\; \rho:=\varphi(b_{g+1})<1.
\end{array}
\right.
\end{equation}
Furthermore, \( \varphi(z) \) has continuous traces in the gaps \( \cup_{i=1}^g(b_i,a_{i+1}) \) that satisfy
\begin{equation}
\label{varphi-jump}
\varphi_+(x) = \varphi_-(x)e^{-2\pi\ic\omega_k}, \quad \omega_k := \sum_{i=1}^k\int_{a_i}^{b_i}\dd\omega_{\Delta,\T},
\end{equation}
for \( x\in(b_k,a_{k+1}) \) and each \( k\in\{1,\ldots,g\} \), where the probability measure \( \omega_{\Delta,\T} \) is given by
\begin{equation}
\label{conden-eq}
\dd\omega_{\Delta,\T}(x) := \frac{|u(x)|\dd x}{|(w\tilde w)(x)|} = \frac{\ic u(x)\dd x}{(w_+\tilde w)(x)} , \quad x\in\Delta.
\end{equation}
\end{proposition}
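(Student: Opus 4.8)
The plan is to read off every assertion from the behaviour of the integrand \( u(s)/(w\tilde w)(s) \) together with the two normalizations in \eqref{phi-norm}. First I would note that \( (w\tilde w)(s) \) is holomorphic and non-vanishing in \( \overline\C\setminus(\Delta\cup\Delta^{-1}) \), so the only obstruction to \( \varphi \) being single-valued is the period of \( \pi u(s)\dd s/(w\tilde w)(s) \) along a loop \( \gamma \) encircling \( [a_1,b_{g+1}] \) once; the domain is doubly connected, its complement having the two connected pieces \( [a_1,b_{g+1}] \) and \( [a_1,b_{g+1}]^{-1} \), so there is a single generating period. Collapsing \( \gamma \) onto the cut and using \( w_+=-w_- \) on \( \Delta \) while the gap integrals vanish by the second condition in \eqref{phi-norm}, this period equals \( \mp2\pi\ic\int_\Delta u(x)\dd x/(w_+\tilde w)(x)=\pm2\pi\ic \) by the first condition. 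Hence the exponential of the period is \( 1 \) and \( \varphi(z) \) is well defined and holomorphic in its stated domain; since \( \T \) is homotopic to \( \gamma \) there, the same computation gives that the increment of \( \arg\varphi=\im\log\varphi \) along \( \T \) is \( \im(\pm2\pi\ic)=2\pi \) for the correct orientation.

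Next I would establish the two symmetries in \eqref{varphi-sym}. Because \( u \) has real coefficients while \( w(\bar z)=\overline{w(z)} \) and \( \tilde w(\bar z)=\overline{\tilde w(z)} \), integrating along a conjugation-symmetric path from the real base point \( 1 \) yields \( \varphi(\bar z)=\overline{\varphi(z)} \). For \( \varphi(1/z)=1/\varphi(z) \) I would substitute \( s\mapsto1/s \): from \( \tilde w(z)=z^{g+1}w(1/z) \) one gets \( (w\tilde w)(1/s)=s^{-2g-2}(w\tilde w)(s) \), while \( u \) is self-inversive, \( u(1/s)=s^{-2g}u(s) \) (see \eqref{U}), and \( \dd(1/s)=-s^{-2}\dd s \), so the form \( u\,\dd s/(w\tilde w) \) reverses sign under inversion; as \( 1 \) is fixed, the exponent changes sign and the claim follows. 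Evaluating at \( \tau\in\T \), where \( 1/\tau=\bar\tau \), the two symmetries give \( \overline{\varphi(\tau)}=\varphi(\bar\tau)=\varphi(1/\tau)=1/\varphi(\tau) \), whence \( |\varphi(\tau)|\equiv1 \).

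To obtain \eqref{varphi-prop} on \( \Delta \) I would study \( h:=\log|\varphi|=\re\log\varphi \). On each \( (a_k,b_k) \) one has \( w_\pm=\pm\ic|w| \) while \( u,\tilde w \) are real, so \( \pi u\,\dd s/(w\tilde w) \) is purely imaginary along \( \Delta \); hence the tangential derivative of \( h \) vanishes and \( h \) is constant on each \( [a_k,b_k] \). The key point is that these constants coincide: across a gap the integrand is real, so \( h(a_{k+1})-h(b_k)=\re\bigl(\pi\int_{b_k}^{a_{k+1}}u\,\dd x/(w\tilde w)\bigr)=0 \) by the second condition in \eqref{phi-norm}, and therefore \( h\equiv\log\rho \) on all of \( \Delta \) with \( \rho=\varphi(b_{g+1}) \) (a positive real, obtained by integrating a real form along \( (b_{g+1},1)\subset\R \)). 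The symmetry \( \varphi(1/z)=1/\varphi(z) \) then forces \( |\varphi|\equiv\rho^{-1} \) on \( \Delta^{-1} \). For \( \rho<1 \) I would note that \( h \) is harmonic in \( \D\setminus\Delta \) (it extends across the gaps since \( |\varphi_+|=|\varphi_-| \) there), vanishes on \( \T \), and equals \( \log\rho \) on \( \Delta \); by the Cauchy--Riemann equations the outward flux of \( h \) across \( \T \) equals the argument increment, which is \( 2\pi\neq0 \), and the sign of this flux (fixed by \( \int_\Delta u\,\dd x/(w_+\tilde w)=-\ic \)) is incompatible with \( \rho\geq1 \), so \( \rho<1 \).

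Finally, for the jump \eqref{varphi-jump} I would compare \( \varphi_\pm \) on a gap \( (b_k,a_{k+1}) \): although the integrand is holomorphic across the gap, the two limits differ by the period acquired when deforming a path approaching from above into one approaching from below, a deformation that encircles exactly the intervals \( [a_1,b_1],\dots,[a_k,b_k] \) to the left. Collapsing this loop onto those intervals as in the first step (the interior gaps again contributing nothing) gives \( \log\varphi_+(x)-\log\varphi_-(x)=\mp2\pi\ic\sum_{i=1}^k\int_{a_i}^{b_i}\ic u\,\dd x/(w_+\tilde w) \), i.e. \( \varphi_+=\varphi_-e^{-2\pi\ic\omega_k} \) with \( \omega_k=\sum_{i=1}^k\int_{a_i}^{b_i}\dd\omega_{\Delta,\T} \); that \( \omega_{\Delta,\T} \) is a positive probability measure follows because \( \ic u/(w_+\tilde w)=|u|/|w\tilde w|\geq0 \) on \( \Delta \) (here \( u \) has no zeros in the interior of \( \Delta \)) and its total mass is \( \ic\int_\Delta u\,\dd x/(w_+\tilde w)=1 \) by \eqref{phi-norm}. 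The main obstacle throughout is bookkeeping: correctly pairing each normalization in \eqref{phi-norm} with the corresponding property, and in particular nailing down the orientations and signs so that the argument increment comes out \( +2\pi \) and, through the flux argument, \( \rho<1 \) rather than \( \rho>1 \); the analytic content (collapsing contours, \( w_+=-w_- \), vanishing gap integrals) is routine once these signs are fixed.
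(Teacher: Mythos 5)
Your proposal is correct and follows essentially the same route as the paper's (much terser) proof: well-definedness and the argument increment from the first normalization in \eqref{phi-norm} via contour collapse with \( w_+=-w_- \), the symmetries \eqref{varphi-sym} from the conjugation- and inversion-invariance of the differential, constancy of \( |\varphi| \) on \( \Delta \) from the integrand being purely imaginary there together with the vanishing gap integrals, and \eqref{varphi-jump} by a direct period computation. The only point where you go beyond the paper is the flux argument for \( \rho<1 \) (which the paper's proof does not address); it works once the signs are fixed, though it is quicker to note that \( \varphi(1)=1 \) while the integrand is real and positive on \( (b_{g+1},1) \), so \( \rho=\varphi(b_{g+1})<1 \) directly.
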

\begin{proof}
The function \( \varphi(z) \) is well-defined due to the first requirement in \eqref{phi-norm}. Relations \eqref{varphi-sym} follow from the conjugate-symmetry of the integrand as well as the symmetry with respect to \( s\mapsto 1/s \) (here, one needs to include the differential \( \dd s \) as well). The first relation in \eqref{varphi-prop} is a direct consequence of \eqref{varphi-sym} and to get the second relation one needs to use the second requirement in \eqref{phi-norm} and the observation that the integrand in \eqref{varphi} is purely imaginary on each side of \( \Delta \). The claim about the increment of the argument follows directly from \eqref{phi-norm}. Relations \eqref{varphi-jump} can be verified via a direct computation.
\end{proof}

The probability measure \( \omega_{\Delta,\T} \) can be interpreted from the point of view of potential theory as the condenser equilibrium distribution on \( \Delta \) of the condenser \( (\Delta,\T) \). Moreover, since the traces \( \varphi_\pm(x) \) have constant argument in each gap \( (b_i,a_{i+1}) \), \( \varphi(z) \) is a conformal map of \( \overline\C\setminus\big([a_1,b_{g+1}]\cup[a_1,b_{g+1}]^{-1}\big) \) into an annulus \( \{\rho<|z|<\rho^{-1}\} \) with ``internal spikes''.

\subsection{Condenser Szeg\H{o} Function} 

Set
\begin{equation}
\label{K}
K(z;x) := \frac1{x-z}+\frac x{1-xz} = \frac{1-2xz+x^2}{(x-z)(1-xz)}
%= -\frac1{\psi(z)}\frac{(x-\psi(z))(1-x\psi(z))}{(x-z)(1-xz)}
\end{equation}
to be the condenser version of the Cauchy kernel. Given a H\"older-smooth function \( \lambda(x) \) on \( \Delta \) that is positive away from the endpoints \( \{a_i,b_i\}_{i=1}^{g+1} \) and has integrable logarithm, let
\begin{multline}
\label{Dmu}
D_\lambda(z) := \exp\left\{\frac1{2\pi\ic}\frac{(w\tilde w)(z)}{u(z)}\left[ \int_\Delta K(z;x)\log\left(\frac{\lambda(x)}{G_\lambda}\right)\frac{u(x)\dd x}{(w_+\tilde w)(x)} - \right.\right. \\ \left.\left.  \sum_{i=1}^g\int_{b_i}^{a_{i+1}}2\pi\ic \kappa_{\lambda,i} K(z;y)\frac{u(y)\dd y}{(w\tilde w)(y)}\right]\right\}
\end{multline}
for \( z\in\overline\C\setminus\big([a_1,b_{g+1}]\cup[a_1,b_{g+1}]^{-1}\big) \), where the constants \( G_\lambda \) and \( k_{\lambda,i} \) are given by
\begin{equation}
\label{Gmu}
G_\lambda := \exp\left\{\int\log\lambda(x)\dd\omega_{\Delta,\T}(x)\right\} \qandq \kappa_{\lambda,i} := \frac1{2\pi\ic}\int_\Delta\log\left(\frac{\lambda(x)}{G_\lambda}\right) \frac{\ell_i(x)\dd x}{(w_+\tilde w)(x)}.
\end{equation}

\begin{proposition}
\label{prop:CS}
It holds that \( D_\lambda(z) \) is a well-defined non-vanishing holomorphic function in its domain of definition. It has continuous traces on each open subinterval of \( \Delta \) and in each gap that satisfy
\begin{equation}
\label{conden-jump}
\left\{
\begin{array}{ll}
G_\lambda|D_{\lambda\pm}(x)|^2  = \lambda(x), & x\in \cup_{i=1}^g(a_i,b_i), \medskip \\
D_{\lambda+}(x) = D_{\lambda-}(x)e^{-2\pi\ic \kappa_{\lambda,k}}, & x\in(b_k,a_{k+1}), \quad k\in\{1,\ldots,g\}.
\end{array}
\right.
\end{equation}
Furthermore, the increment of the argument of \( D_\lambda(z) \) along the unit circle is equal to zero and it holds that
\begin{equation}
\label{conden-sym}
D_\lambda(1/z) = 1/D_\lambda(z) \qandq |D_\lambda(\tau)| \equiv 1, \;\; \tau\in\T.
\end{equation}
Moreover, if \( \lambda(x) = |x-e|^\alpha\lambda_*(x) \), where \( e\in\{a_i,b_i\}_{i=1}^{g+1} \) and \( \lambda_*(x) \) is H\"older-smooth, then
\begin{equation}
\label{conden-van}
|D_\lambda(z)|^2 = |z-e|^\alpha |D_*(z)|^2,
\end{equation}
where the function \( D_*(z) \) is bounded and non-vanishing in some neighborhood of \( e \) (including the traces on \( \Delta \)).
\end{proposition}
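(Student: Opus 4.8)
The plan is to mirror the proof of Proposition~\ref{prop:szego}, replacing the Cauchy kernel by the condenser kernel $K(z;x)$ and the polynomial $p(z)$ by $u(z)$. Since the zeros of $u(z)$ lie in the gaps and in the reflected gaps, hence outside the domain of definition, and since $w$ and $\tilde w$ are cut only along $\Delta$ and $\Delta^{-1}$, the two Cauchy-type integrals and the rational prefactor $(w\tilde w)(z)/u(z)$ are all holomorphic in $\overline\C\setminus\big([a_1,b_{g+1}]\cup[a_1,b_{g+1}]^{-1}\big)$; the exponent therefore has no poles there, the only points needing attention being $\infty$ and $0$, where the prefactor grows like $z^2$. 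I would expand $K(z;x)=-2/z+\mathcal{O}(z^{-2})$ as $z\to\infty$ and observe that the resulting leading coefficient of the bracketed expression is a multiple of $\int_\Delta\log(\lambda/G_\lambda)\,u\,\dd x/(w_+\tilde w)-\sum_{i}2\pi\ic\kappa_{\lambda,i}\int_{b_i}^{a_{i+1}}u\,\dd y/(w\tilde w)$, both terms of which vanish: the first because $\dd\omega_{\Delta,\T}=\ic u\,\dd x/(w_+\tilde w)$ by \eqref{conden-eq} and $\int\log(\lambda/G_\lambda)\,\dd\omega_{\Delta,\T}=0$ by the definition of $G_\lambda$ together with $\omega_{\Delta,\T}$ being a probability measure, the second by the gap conditions in \eqref{phi-norm}. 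Hence the bracketed term is $\mathcal{O}(z^{-2})$ and the exponent stays finite at $\infty$; finiteness at $0$ is immediate since $K(0;x)$, the prefactor, and the integrals are finite there. Non-vanishing is then automatic.

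For the jump relations \eqref{conden-jump} I would invoke the Sokhotski--Plemelj formulas. Across each open subinterval of $\Delta$ the prefactor changes sign, $(w_+\tilde w)=-(w_-\tilde w)$, while only the singular part $1/(x-z)$ of $K$ in the first integral jumps; combining the two contributions gives $E_++E_-=\log(\lambda(x)/G_\lambda)$, i.e.\ $D_{\lambda+}(x)D_{\lambda-}(x)=\lambda(x)/G_\lambda$, and the desired $G_\lambda|D_{\lambda\pm}(x)|^2=\lambda(x)$ then follows from the conjugate symmetry $D_\lambda(\bar z)=\overline{D_\lambda(z)}$, itself a consequence of $u$ having real coefficients and of $G_\lambda$ and the $\kappa_{\lambda,i}$ being real. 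In a gap $(b_k,a_{k+1})$ the prefactor and the first integral are holomorphic, so the only jump comes from the $i=k$ summand, whose contour is exactly that gap; the Plemelj jump of $\int_{b_k}^{a_{k+1}}K(z;y)\,u\,\dd y/(w\tilde w)$ yields $E_+-E_-=-2\pi\ic\kappa_{\lambda,k}$, that is $D_{\lambda+}(x)=D_{\lambda-}(x)e^{-2\pi\ic\kappa_{\lambda,k}}$.

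The symmetry relations \eqref{conden-sym} I would deduce from the behavior of the data under $z\mapsto1/z$. The homogeneities $u(1/z)=z^{-2g}u(z)$ and $(w\tilde w)(1/z)=z^{-2g-2}(w\tilde w)(z)$ give $(w\tilde w)(1/z)/u(1/z)=z^{-2}(w\tilde w)(z)/u(z)$, while a short computation yields the kernel identity $K(1/z;x)=-z\bigl(zK(z;x)+2\bigr)$. Feeding this into the exponent, the contribution of the $+2$ term is again the combination $\int_\Delta\log(\lambda/G_\lambda)\,u\,\dd x/(w_+\tilde w)-\sum_i2\pi\ic\kappa_{\lambda,i}\int_{b_i}^{a_{i+1}}u\,\dd y/(w\tilde w)$ that vanishes by the same normalizations, so that the exponent is sent to its negative and $D_\lambda(1/z)=1/D_\lambda(z)$. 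On $\T$ one has $1/\tau=\bar\tau$, and combining this identity with $D_\lambda(\bar z)=\overline{D_\lambda(z)}$ gives $\overline{D_\lambda(\tau)}=1/D_\lambda(\tau)$, hence $|D_\lambda(\tau)|\equiv1$. The vanishing of the argument increment along $\T$ is then obtained exactly as for the one-interval function $D_{\dot\mu}(z)$ of Section~2.

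Finally, for the endpoint behavior \eqref{conden-van} I would use that $\log D_\lambda$ is a \emph{linear} functional of $\log\lambda$ (the normalization $\log G_\lambda=\int\log\lambda\,\dd\omega_{\Delta,\T}$ is itself linear), so writing $\log\lambda=\alpha\log|x-e|+\log\lambda_*$ factorizes $D_\lambda$, up to a unimodular constant, into the condenser Szeg\H{o} functions of $|x-e|^\alpha$ and of $\lambda_*$. The second factor is bounded and non-vanishing near $e$ because $\log\lambda_*$ is H\"older. \textbf{The crux, and the step I expect to be the main obstacle, is the factor generated by $|x-e|^\alpha$.} Near the branch point $e$ the prefactor behaves like $(z-e)^{1/2}$, since $w(z)\sim C(z-e)^{1/2}$ while $\tilde w$ and $u$ are regular and non-zero there, whereas the integral $\int_\Delta K(z;x)\,\alpha\log|x-e|\,u\,\dd x/(w_+\tilde w)$ develops a singularity as $z\to e$ because its density carries both a $\log|x-e|$ factor and a $(x-e)^{-1/2}$ factor from $1/w_+$. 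I would extract from the product of prefactor and integral the singular term $\tfrac{\alpha}{2}\log(z-e)$ and show the remainder stays bounded, so that $D_\lambda(z)=(z-e)^{\alpha/2}D_*(z)$ with $D_*$ bounded and non-vanishing in a neighborhood of $e$; taking moduli then gives \eqref{conden-van}. This amounts to the classical fact that the Szeg\H{o} function of an algebraic endpoint weight $|x-e|^\alpha$ acquires the factor $(z-e)^{\alpha/2}$, here adapted to the branch-point geometry of $w\tilde w$, and it is this local asymptotic analysis that requires the most care.
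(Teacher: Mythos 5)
Your overall strategy --- mirroring Proposition~\ref{prop:szego} with \( K(z;x) \) in place of the Cauchy kernel and \( u(z) \) in place of \( p(z) \) --- is the right one, and your treatment of the point at infinity, of the jumps via Sokhotski--Plemelj, and of the symmetry \( D_\lambda(1/z)=1/D_\lambda(z) \) (your kernel identity \( K(1/z;x)=-z^2K(z;x)-2z \) is correct) all match the actual proof. But there is one genuine gap, and it sits exactly where you declare the situation harmless. You write that the zeros of \( u(z) \) lie ``outside the domain of definition'' and, later, that in a gap \( (b_k,a_{k+1}) \) ``the prefactor and the first integral are holomorphic.'' The first half of that last assertion is false: by \eqref{U} each gap \( (b_k,a_{k+1}) \) contains exactly one zero \( x_k \) of \( u(z) \), so the prefactor \( (w\tilde w)(z)/u(z) \) has a simple pole at a point lying in the interior of the gap, i.e., on the boundary of the domain where the traces are taken. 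If the bracketed expression in \eqref{Dmu} did not vanish at \( x_k \), the exponent would blow up as \( z\to x_k \) from either side, the traces of \( D_\lambda \) would fail to be continuous (indeed, to be finite and non-vanishing) there, and the constant-jump relation in \eqref{conden-jump} could not hold on all of \( (b_k,a_{k+1}) \). This is precisely the analogue of the step in Proposition~\ref{prop:szego} where \eqref{van1} forces the bracket to vanish at the zeros of \( p(z) \) --- the step your ``mirroring'' plan should have reproduced but instead skips. The paper proves it by observing that \( u(x)K(x_j;x) \) is a symmetric polynomial of degree \( 2g \), applying the identity \eqref{van3} (built from \eqref{elli2}), and invoking \( \int_\Delta\log(\lambda/G_\lambda)\,\dd\omega_{\Delta,\T}=0 \) from \eqref{Gmu}; this reduces the value of the bracket at \( x_j \) to a multiple of \( \int_\Delta u(x)\log(\lambda/G_\lambda)\,\dd x/(w_+\tilde w)(x)=0 \).

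Two smaller remarks. Your closing analysis of \eqref{conden-van}, which you single out as ``the crux,'' is sound but is not where the difficulty lies: the paper dispatches both the \( \Delta \)-jump in \eqref{conden-jump} and \eqref{conden-van} by citing the standard local theory of Cauchy-type integrals with power and logarithmic densities, \cite[Sections~4.2 and~8.6]{Gakhov}. And for the vanishing of the argument increment along \( \T \) you only point to the one-interval case, which the paper itself does not prove but imports from \cite{BStW01}; in the multi-cut setting one must additionally check that \( \log|D_\lambda| \) is harmonic across the gaps (this uses that the constants \( \kappa_{\lambda,i} \) are real, and again the continuity of the traces at \( x_k \), i.e., the missing step above) before running the Green's-formula argument against \( \log|\varphi/\rho| \).
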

\begin{proof}
To prove the first claim, it is enough to investigate what happens in the unit disk only as \( D_\lambda(1/z) = 1/D_\lambda(z) \). Indeed, we have that \( (w\tilde w/u)(1/z) = (1/z^2)(w\tilde w/u)(z) \) and \( (1/z^2)K(1/z;x) = (2/z) - K(z;x) \). Since the integrals in \eqref{Dmu} with \( K(z;x) \) replaced by \(  1/z \) vanish due to \eqref{phi-norm}, \eqref{conden-eq}, and the choice of \( G_\lambda \) in \eqref{Gmu}, the claim follows. 

To continue, let us make the following observation: it follows from \eqref{elli1} and \eqref{elli2} that
\[
\ell_k(x) - \sum_{i=1}^g \ell_i(x)\int_{b_i}^{a_{i+1}} \frac{\ell_k(y)\dd y}{(w\tilde w)(y)} \equiv 0
\]
for each \( k\in\{1,\ldots,g \} \). Further, let \( p(x) \) be any symmetric polynomial of degree \( 2g \), which we can write as, \( p(x) = p_0 x^g + \sum_{k=1}^g c_k(p) \ell_k(x) \) for some constants \( c_k(p) \). The previous identity then yields that
\begin{equation}
\label{van3}
p(x) - \sum_{i=1}^g \ell_i(x)\int_{b_i}^{a_{i+1}} \frac{p(y)\dd y}{(w\tilde w)(y)} = p_0\left( x^g - \sum_{i=1}^g \ell_i(x)\int_{b_i}^{a_{i+1}} \frac{y^g\dd y}{(w\tilde w)(y)} \right) = \frac{p_0}{u_0} u(x),
\end{equation}
where \( u(x) \) was defined in \eqref{U}. 

Going back to the proof of the first claim, observe that \( K(z;x) - (z-x)^{-1} \) is an analytic function of \( z\in\D \) for each \( x\in\Delta \), see \eqref{K}. Therefore, \( D_\lambda(z) \) is analytic in \( \D\setminus[a_1,b_{g+1}] \) away from the zeroes of \( u(x) \). However, it is analytic at those zeroes as well since the term in square brackets vanishes at them. Indeed, let \( x_j \) be such a zero. Observe that \( u(x)K(x_j;x) \) is a symmetric polynomial of degree \( 2g \) according to \eqref{K}. Therefore,  the term in square brackets is equal to
\begin{multline*}
\int_\Delta \left(u(x)K(x_j;x)-\sum_{i=1}^g \ell_i(x)\int_{b_i}^{a_{i+1}}\frac{u(y)K(x_j;y)\dd y}{(w\tilde w)(y)}\right)\log\left(\frac{\lambda(x)}{G_\lambda}\right)\frac{\dd x}{(w_+\tilde w)(x)}  = \\ \frac{h_j}{u_0}\int_\Delta u(x)\log\left(\frac{\lambda(x)}{G_\lambda}\right)\frac{\dd x}{(w_+\tilde w)(x)} = 0
\end{multline*}
by \eqref{van3} and the choice of \( G_\lambda \) in \eqref{Gmu}, where \( h_j \) is the coefficient next to \( x^g \) of \( u(x)K(x_j;x) \). Hence, \( D_\lambda(z) \) is indeed holomorphic and non-vanishing in \( \overline\C\setminus\big([a_1,b_{g+1}]\cup[a_1,b_{g+1}]^{-1}\big) \).

Relations \eqref{conden-jump} and \eqref{conden-van} hold due to known behavior of Cauchy integrals and the fact that \( K(z;x) \) is ``essentially'' a Cauchy kernel, \cite[Sections~4.2 and~8.6]{Gakhov}. 

The first identity in \eqref{conden-sym} has already been explained and the second one follows from the first and the fact that \( D_\lambda(\bar z) = \overline{D_\lambda(z)} \). 

It only remains to prove the claim about the increment of the argument of \( D_\lambda(z) \), say \( I \), along \( \T \) (the proof is an adjustment of the one in \cite[Lemmas~2.40 and~5.2]{BStW01}).  \( I \) is equal to the integral of the tangential derivative of the argument of \( D_\lambda(z) \) over the unit circle (oriented counter-clockwise). Due to Cauchy-Riemann relations we have that  \( I = \frac1{2\pi}\oint_\T (\partial_{\boldsymbol n} h)(s)|\dd s|\), where \( \partial_{\boldsymbol n} \) is the partial derivative on \( \partial U \) with respect to the inner normal and \( h(z) := \log|D_\lambda(z)| \) is harmonic in \( U:=\D\setminus\Delta \) (harmonicity across the gaps follows from the second relation in \eqref{conden-jump} and the fact that constants \( \kappa_{\lambda,i} \) are real). Then it follows from Green's formula that
\[
\oint_{\partial U} g(s)(\partial_{\boldsymbol n} h)(s)|\dd s| = \oint_{\partial U} h(s)(\partial_{\boldsymbol n} g)(s)|\dd s|,
\]
where \( g(z) := \log|\varphi(z)/\rho| \) is also a harmonic function in \( U \), see \eqref{varphi} and \eqref{varphi-prop}. Since \( h(x)=(1/2)\log(\lambda/G_\lambda)(x) \) for \( x\in\Delta \) and \( h(z)\equiv 0 \) on \( \T \) by \eqref{conden-jump} and \eqref{conden-sym}, and \( g(z)\equiv0 \) on \( \Delta \) and \( g(z)\equiv-\log\rho \) on \( \T \) by \eqref{varphi-prop}, it holds that
\[
I = \frac{-1}{4\pi\log\rho}\int_\Delta \log\left(\frac{\lambda(x)}{G_\lambda}\right)\big(\partial_{\boldsymbol n+}g+\partial_{\boldsymbol n-}g)(x)\dd x.
\]
Now, it readily follows from \eqref{varphi} that
\[
(\partial_{\boldsymbol n\pm}g)(x) = \left.\pm\pi \partial_y \Re\left(\int_1^z\frac{u(s)\dd s}{(w\tilde w)(s)}\right)\right|_{z=x} =  \frac{\pi\ic u(x)}{(w_+ \tilde w)(x)}.
\]
Plugging the above expression in the last formula for \( I \) and recalling \eqref{conden-eq} gives us
\[
I = \frac{-1}{2\log\rho}\int_\Delta\log\left(\frac{\lambda(x)}{G_\lambda}\right)\dd\omega_{\Delta,\T}(x) = 0, 
\]
where we used \eqref{Gmu} for the last equality.
\end{proof}

\subsection{Main Theorem}

Let \( f(z) \) be given by \eqref{markov} and \eqref{smooth-meas1}--\eqref{sobolev} and \( \{r_n(z)\} \) be a sequence of irreducible critical points in rational \( \bar H_\R^2 \)-approximation of \( f(z) \). Write \( r_n(z) = (p_n/q_n)(z) \). Recall that \( q_n(z) \) satisfies \eqref{ortho} with 
\begin{equation}
\label{vn-cp}
v_n(z) = \varkappa_n\tilde q_n^2(z),
\end{equation}
where \( \tilde q_n(z) = z^nq_n(1/z) \) and \( \varkappa_n \) is the reciprocal of the square of the product of non-zero zeroes of \( q_n(z) \), which turns \( v_n(z) \) into a monic polynomial. Let divisors \( \mathcal D_n = \sum_{i=1}^g\x_{n,i} \) be the solutions of Jacobi inversion problem \eqref{main-jip} with \( v_n(z) \) given by \eqref{vn-cp}. Recall that the points \( \x_{n,i} \) could be labeled so that \( x_{n,k} \in [b_k,a_{k+1}] \) for each \( k\in\{1,\ldots,g\} \). Set
\begin{equation}
\label{mn}
m_n(z) := (z-x_{n,1})\cdots(z-x_{n,g}).
\end{equation}
Denote further by \( B_n(z) \) the Blaschke product that vanishes exactly at those points \( x_{n,i} \) that correspond to the elements of the divisor \( \mathcal D_n \) that belong to \( \boldsymbol D \), that is,
\begin{equation}
\label{Bn}
B_n(z) := \prod_{\x_{n,i}\in\boldsymbol D}\frac{z-x_{n,i}}{1-x_{n,i}z}.
\end{equation}
We shall denote by \( d_n \) the number of zeroes of \( B_n(z) \), i.e., the number of elements of \( \mathcal D_n \) that belong to \( \boldsymbol D \). Finally, put
\begin{equation}
\label{lambdan}
\lambda_n(x) := \rho(x)B_n^2(x)/m_n(x), \quad x\in\Delta,
\end{equation}
which is a positive and continuous function (with the exception of those \( x_{n,i} \) that belong to \( \{a_i,b_i\}_{i=1}^{g+1} \)). Recall the definition of the function \( w(z) \) in \eqref{w} as well as Propositions~\ref{prop:varphi}--\ref{prop:CS}. An analog of Theorem~\ref{thm:M1-CP} in the multi-cut case can be stated as follows.

\begin{theorem}
\label{thm:Mg-CP}
Let \( f(z) \) be given by \eqref{markov} and \eqref{smooth-meas1}--\eqref{sobolev}.  Further, let \( \{r_n(z)\} \) be a sequence of irreducible critical points in rational \( \bar H_\R^2 \)-approximation of \( f(z) \). Then it holds that
\[
f(z)-r_n(z) = \big(2G_{\lambda_n}+o(1)\big)\frac{m_n(z)}{B_n^2(z)}\frac{D_{\lambda_n}^2(z)}{w(z)}\left(\frac\rho{\varphi(z)}\right)^{2(n-d_n)}
\]
as \( n\to \infty \) locally uniformly in \( \overline\C\setminus\big([a_1,b_{g+1}]\cup[a_1,b_{g+1}]^{-1}\big) \).
\end{theorem}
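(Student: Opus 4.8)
The plan is to derive Theorem~\ref{thm:Mg-CP} from the multipoint Pad\'e asymptotics already recorded in Theorem~\ref{thm:Mg-MP}. First I would use that an irreducible critical point $r_n=(p_n/q_n)(z)$ is a maximal rational interpolant, i.e.\ a multipoint Pad\'e approximant whose interpolation multiset $E_n$ consists of the reflections $z\mapsto1/z$ of the poles of $r_n$, each taken with twice its multiplicity; this is exactly the statement $v_n=\varkappa_n\tilde q_n^2$ of \eqref{vn-cp}. Since the poles lie in $\Delta\subset(-1,1)$, the finite points of $E_n$ lie in $\Delta^{-1}$, which is uniformly bounded away from $[a_1,b_{g+1}]$; hence the hypotheses of Theorem~\ref{thm:Mg-MP} are met and
\[
f(z)-r_n(z)=(2+o(1))(T_n\psi_n)(z)\frac{(mS_{\dot\mu}^2)(z)}{w(z)}
\]
locally uniformly in $\overline\C\setminus[a_1,b_{g+1}]$, in particular in $\overline\C\setminus\big([a_1,b_{g+1}]\cup[a_1,b_{g+1}]^{-1}\big)$. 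It therefore remains to identify the main term, that is, to show that
\[
(T_n\psi_n)(z)(mS_{\dot\mu}^2)(z)=\big(G_{\lambda_n}+o(1)\big)\frac{m_n(z)}{B_n^2(z)}D_{\lambda_n}^2(z)\left(\frac{\rho}{\varphi(z)}\right)^{2(n-d_n)}
\]
locally uniformly in $\overline\C\setminus\big([a_1,b_{g+1}]\cup[a_1,b_{g+1}]^{-1}\big)$.

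To organize the comparison I would form the ratio
\[
H_n(z):=\frac{(T_n\psi_n)(z)(mS_{\dot\mu}^2)(z)B_n^2(z)}{m_n(z)D_{\lambda_n}^2(z)}\left(\frac{\varphi(z)}{\rho}\right)^{2(n-d_n)}
\]
and show it is analytic and non-vanishing in $\overline\C\setminus\big([a_1,b_{g+1}]\cup[a_1,b_{g+1}]^{-1}\big)$ and, after cancellation of singularities, extends analytically and without zeros across the gaps $(b_k,a_{k+1})$ and their reflections to $\overline\C\setminus(\Delta\cup\Delta^{-1})$. The zeros and poles of the separate factors — those of $\psi_n$ at $E_n\subset\Delta^{-1}$, of $B_n,m_n$ at $x_{n,i}$ and $1/x_{n,i}$, and of $T_n$ at $s_i,x_{n,i}$ in the gaps — all sit on the removed cuts, and a short local computation confirms they cancel in $H_n$ (e.g.\ in a gap $T_n\cdot m$ is regular at $s_i$, while $T_nB_n^2/m_n$ is regular and non-zero at $x_{n,i}$). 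The key bookkeeping is that all jumps cancel: by \eqref{psin-bdry}, \eqref{Tn-bdry}, and \eqref{szego-jump} the phase jumps of $\psi_n$, $T_n$, and $S_{\dot\mu}^2$ sum to $-4\pi\ic\omega_{n,k}+4\pi\ic(c_{\dot\mu,k}+\omega_{n,k})-4\pi\ic c_{\dot\mu,k}=0$, while the cancellation of the jumps of $D_{\lambda_n}^2$ and $\varphi^{2(n-d_n)}$ from \eqref{conden-jump} and \eqref{varphi-jump} reduces to a congruence relating $\kappa_{\lambda_n,k}$, $(n-d_n)\omega_k$, and the divisor, which is exactly the content of the Jacobi inversion problem \eqref{main-jip} defining $\mathcal D_n$ (hence $B_n,m_n,\lambda_n$); the reflected gaps are handled identically via $\varphi(1/z)=1/\varphi(z)$, $D_{\lambda_n}(1/z)=1/D_{\lambda_n}(z)$, and $B_n(1/z)=1/B_n(z)$. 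Finally, using $|S_{\dot\mu\pm}|^2=\dot\mu$, $\rho=\dot\mu m$, $G_{\lambda_n}|D_{\lambda_n\pm}|^2=\lambda_n=\rho B_n^2/m_n$, $|\psi_{n\pm}|=|T_{n\pm}|=1$, and $\varphi_+\varphi_-=\rho^2$ on $\Delta$, a direct product computation yields $H_{n+}H_{n-}=G_{\lambda_n}^2$ and $|H_{n\pm}|=G_{\lambda_n}$ on $\Delta$.

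Unlike the one-interval situation of Theorem~\ref{thm:M1-CP}, $H_n$ is \emph{not} exactly constant: $\psi_n$ carries $2n$ moving zeros in $\Delta^{-1}$ that cannot be matched by the zero-free factor $(\varphi/\rho)^{2(n-d_n)}$ together with the finitely many Blaschke factors in $B_n$. Thus $u_n:=\log|H_n/G_{\lambda_n}|$ is harmonic in $\overline\C\setminus(\Delta\cup\Delta^{-1})$ and vanishes on $\Delta$, but on $\Delta^{-1}$ it equals $\log|\psi_n|-2(n-d_n)\log|\rho/\varphi|+O(1)$. The crux of the whole argument is to show this is $o(1)$ away from $\Delta^{-1}$, i.e.\ that
\[
\frac1{2(n-d_n)}\log|\psi_n(z)|\longrightarrow\log\left|\frac{\rho}{\varphi(z)}\right|
\]
locally uniformly in $\overline\C\setminus\Delta^{-1}$. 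This is a statement about the limiting distribution of the zeros of $\psi_n$, equivalently of the poles of $r_n$: since $r_n$ is critical, $q_n$ is orthogonal with respect to $\dd\mu/(\varkappa_n\tilde q_n^2)$, see \eqref{ortho}, a weight built from the reflected poles, so the normalized pole-counting measure solves the equilibrium problem for the condenser $(\Delta,\T)$, whose solution is $\omega_{\Delta,\T}$ and whose condenser potential is $\log|\varphi/\rho|$. Granting this, $u_n\to0$ locally uniformly by the maximum principle, and a companion argument for the harmonic conjugate — pinned by the real, conjugate-symmetric value of $H_n$ at a point of $\R\setminus(\Delta\cup\Delta^{-1})$ — upgrades $|H_n|\to G_{\lambda_n}$ to $H_n\to G_{\lambda_n}$.

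The main obstacle is exactly this displayed convergence: one must establish that the self-consistent, reflection-coupled pole distribution of the critical points is governed by $\omega_{\Delta,\T}$ and that the Blaschke-type product $\psi_n$ converges to the condenser exponential uniformly off $\Delta^{-1}$; everything preceding it is jump bookkeeping. Two technical points would remain. First, degenerate divisors, in which some $x_{n,i}$ collides with an endpoint $a_i$ or $b_i$ so that $\lambda_n$, see \eqref{lambdan}, acquires an algebraic singularity: here \eqref{conden-van} shows the offending factors stay bounded and bounded away from zero on compact subsets and do not affect the limit. Second, the points of $E_n$ at infinity (present when $q_n$ vanishes at the origin), which require tracking the $m_\infty$ contributions to $\psi_n$ and the exponent $2(n-d_n)$ but merely shift indices. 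Substituting the resulting main-term identity into the Pad\'e asymptotic of Theorem~\ref{thm:Mg-MP} then gives the claimed formula with error $o(1)$.
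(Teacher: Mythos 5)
Your reduction to Theorem~\ref{thm:Mg-MP} via \( v_n=\varkappa_n\tilde q_n^2 \), the formation of the comparison function \( H_n(z) \), and the jump bookkeeping on the gaps and on \( \Delta \) all match the paper's proof. The genuine gap is in what you call the crux. You reduce everything to the claim \( \tfrac1{2(n-d_n)}\log|\psi_n(z)|\to\log|\rho/\varphi(z)| \) and assert that, granting it, \( u_n=\log|H_n/G_{\lambda_n}|\to0 \) by the maximum principle. That inference fails: the displayed convergence is an \( n \)-th--root (weak) asymptotic, and even if established it only gives \( u_n=o(n) \), not the \( o(1) \) needed for a multiplicative error of \( 1+o(1) \). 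Moreover, your justification for it --- that the normalized pole-counting measure solves the condenser equilibrium problem --- is itself only a weak-asymptotics heuristic, so you are essentially assuming strong asymptotics for the pole distribution, which is what has to be proven. (There is also a technical issue with prescribing boundary data for \( u_n \) on \( \Delta^{-1} \), where \( \psi_n \) actually vanishes.)

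The paper avoids any statement about the limiting pole distribution. It controls \( h_n=\log|H_n| \) on the \emph{unit circle} rather than on \( \Delta^{-1} \): writing \( (f-r_n)(z)=\varkappa_n(b_n^{-2}q_nR_n)(z) \) with \( b_n=q_n/\tilde q_n \) unimodular on \( \T \), and using the identity \( \Psi_n(\z)\Psi_n(\z^*)=(\gamma_n^*/\gamma_n)m_n(z) \) together with \eqref{asymp2}, one gets \( |(f-r_n)(\tau)|=(2+o(1))|\varkappa_n\gamma_n\gamma_n^*||(m_n/w)(\tau)| \) uniformly on \( \T \); comparing with \eqref{error1} shows \( h_n \) equals an explicit constant plus \( o(1) \) on \( \T \), while \( h_n\equiv0 \) on \( \Delta \). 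The harmonic-function lemma \cite[Lemma~4.7]{BStW01} for the condenser \( (\Delta,\T) \), whose flux hypothesis is verified by the zero total increment of the argument of \( H_n \) along \( \T \) (here the exponents \( -4\pi n+4\pi d_n+4\pi(n-d_n)=0 \) enter), then forces that constant to be \( o(1) \), and the maximum principle plus positivity on \( (b_{g+1},1) \) gives \( H_n\to1 \) in \( \D \); the exterior is handled by reflection. You would need to replace your pole-distribution step by an argument of this strength for the proof to close.
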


It is tempting to surmise that \( D_{\lambda_n}(z)/\varphi^{n-d_n}(z) \) is analytic across the gaps the way the product \( (T_n\psi_nS_{\dot\mu}^2)(z) \) from Theorem~\ref{thm:Mg-MP} is. Unfortunately, while the choice of \( T_n(z) \) was driven exactly by the need to cancel the jumps of \( (\psi_nS_{\dot\mu}^2)(z) \) across the gaps, the function \( \lambda_n(x) \) was devised out of necessities of the forthcoming proof. Observe also that its definition requires the knowledge of the divisors \( \mathcal D_n \) and therefore of the exact locations of the zeroes of the polynomials \( q_n(z) \). Of course, it would be much preferable to be able to argue that points \( x_{n,i} \) need to be selected so that \( D_{\lambda_n}(z)/\varphi^{n-d_n}(z) \) has no jumps across the gaps and then to prove Theorem~\ref{thm:Mg-CP} with this knowledge only.

Below, we follow the approach developed in \cite{BStW01} for the proof of Theorem~\ref{thm:M1-CP}.

\begin{proof}[Proof of Theorem~\ref{thm:Mg-CP}]
As above, write \( r_n(z)=p_n(z)/q_n(z) \). Since \( q_n(z) \) has real coefficients and its zeroes belong to the unit disk, \( v_n(z) \) has real coefficients and is non-zero in \( \D \). Thus, Theorem~\ref{thm:Mg-MP} is applicable to \( (p_n/q_n)(z) \) and we have
\begin{equation}
\label{error1}
(f-r_n)(z) = (2+o(1))(T_n\psi_n)(z)\frac{\big(mS_{\dot\mu}^2\big)(z)}{w(z)}
\end{equation}
locally uniformly in \( \overline\C\setminus[a_1,b_{g+1}]\). Define \( h_n(z) := \log|H_n(z)| \), where
\[
H_n(z) := (T_n\psi_n)(z)\frac{m(z)}{m_n(z)}\frac{B_n^2(z)S_{\dot\mu}^2(z)}{G_{\lambda_n}D_{\lambda_n}^2(z)}\left( \frac{\varphi(z)}\rho\right)^{2(n-d_n)}.
\]
It follows from \eqref{varphi-jump} and \eqref{conden-jump} that \( |H_{n+}(x)|=|H_{n-}(x)| \) for \( x\in \cup_{k=1}^g(b_k,a_{k+1}) \) and the jump is constant in each gap. Moreover, according to Proposition~\ref{prop:Tn} and equations \eqref{conden-van}, \eqref{Bn}, the function \( |(mT_nB_n^2/m_nD_{\lambda_n}^2)(z)| \) is non-vanishing in the gaps of \( \Delta \) (observe that if \( x_{n,i} \) is equal to either \( b_i \) or \( a_{i+1} \), then it is of course a simple zero of \( m_n(z) \), it is essentially a "simple pole" of \( D_{\lambda_n}^2(z) \), while all other functions are non-vanishing around it). Since the zeroes of the polynomials \( q_n(z) \) belong to \( \Delta \), the zeroes of \( \tilde q_n(z) \) and therefore of \( \psi_n(z) \) belong to \( \Delta^{-1} \). Thus, \( h_n(z) \) is harmonic in \( \D\setminus\Delta \) as well as across \( \T \) (recall that \( \Delta\subset(-1,1) \)). Furthermore, since
\[
G_{\lambda_n}|D_{\lambda_n\pm}^2(x)| = \lambda_n(x) = B_n^2(x)\dot\mu(x)m(x)/m_n(x)
\]
by \eqref{conden-jump}, it holds by \eqref{psin-bdry}, \eqref{szego-jump}, \eqref{Tn-bdry}, and \eqref{varphi-prop} that
\begin{equation}
\label{hnDelta}
h_n(x) \equiv 0,  \quad x\in\Delta.
\end{equation}

To describe the behavior of \( h_n(z) \) on the unit circle, set \( b_n(z) := q_n(z)/\tilde q_n(z) \). Then it follows from \eqref{Rn} that
\begin{equation}
\label{cp-err}
(f-r_n)(z) = \varkappa_n \big(b_n^{-2}q_nR_n\big)(z).
\end{equation}
Let now \( \Psi_n(\z) \) and \( \gamma_n \) be as in the proof of Theorem~\ref{thm:Mg-MP}, see \eqref{Psin} and \eqref{asymp1}. Then we get from \eqref{asymp2} that
\[
(f-r_n)(z) = \big(\varkappa_n\gamma_n^2\big)\frac{2 + o(1)}{w(z)}  \frac{\Psi_n(\z)\Psi_n(\z^*)}{b_n^2(z)}
\]
locally uniformly in \( \overline\C\setminus[a_1,b_{g+1}] \). It is quite easy to see from \eqref{Psin-jump} that the product \( \Psi_n(\z)\Psi_n(\z^*) \) has no jump on \( \bd \) and therefore is a rational function on \( \RS \). Since it is symmetric with respect to the involution \( \z\mapsto\z^* \), it must be a lift of a rational function on \( \overline\C \) to \( \RS \). The form of its zero/pole divisor then yields that
\begin{equation}
\label{prod-Psis}
\Psi_n(\z)\Psi_n(\z^*) = (\gamma_n^*/\gamma_n)m_n(z),
\end{equation}
where \( \gamma_n^* := \lim_{\z\to\boldsymbol\infty^*}\Psi_n(\z)z^{n-g} \). Since Blaschke products \( b_n(z) \) are unimodular on the unit circle, we can write
\begin{equation}
\label{error2}
|(f-r_n)(\tau)| = (2 + o(1))|\varkappa_n\gamma_n\gamma_n^*||(m_n/w)(\tau)|
\end{equation}
uniformly for \( \tau\in\T \). Comparing \eqref{error1} and \eqref{error2}, we see that
\[
\varkappa_n\gamma_n\gamma_n^* = (1+o(1))|(T_n\psi_n)(\tau)||(m/m_n)(\tau)||S_{\dot\mu}^2(\tau)|
\]
uniformly for \( \tau\in\T \). We do not need an absolute value around the constant \( \varkappa_n\gamma_n\gamma_n^* \) since
\[
\varkappa_n\gamma_n\gamma_n^* = \varkappa_n\lim_{\x\to\boldsymbol\infty}z^{2n-g}\Psi_n(\z^*)/\Psi_n(\z) = \lim_{0<x\to\infty}x^{2n-g}\frac{\psi_n(x)}{\tilde q_n^2(x)}\frac{T_n(x)m(x)}{S_{\dot\mu}^2(x)}>0,
\]
where the first equality follows straight from the definitions of these constants, see \eqref{asymp1} and \eqref{prod-Psis}, the second one is a consequence of \eqref{Psin}, \eqref{psin-sn}, \eqref{Tn}, \eqref{vn-cp}, as well as the meromorphy of \( \Psi_n(\z^*)/\Psi_n(\z) \) around \( \boldsymbol\infty \), and the last inequality holds because \( (\tilde q_nS_{\dot\mu})(x) \) is real for \( x>b_{g+1} \), \( \psi_n(x) \) is positive there (exponential of a real-valued function), and so is \( T_n(x) \) (it is real and non-vanishing there and \( \lim_{x\to b_{g+1}^+}T_n(x)=1\)). Because Blaschke products  \( B_n(z) \) are unimodular on \( \T \), it follows from \eqref{varphi-prop} and \eqref{conden-sym} that
\begin{equation}
\label{hnT}
h_n(\tau) = \log\left(\frac{\varkappa_n\gamma_n\gamma_n^*}{G_{\lambda_n}\rho^{2(n-d_n)}}\right) +o(1)
\end{equation}
uniformly on \( \T \). Let us now recall \cite[Lemma~4.7]{BStW01}. It states that if \( U\subset \overline\C \) is a domain such that \( \partial U = K_1\cup K_2 \), where \( K_1,K_2 \) are two compact disjoint sets and if \( h(z) \) is a harmonic function in \( U \) such that
\begin{equation}
\label{o-int}
\oint_C \partial_{\boldsymbol n} h(s)|\dd s| =0,
\end{equation}
where \( \partial_{\boldsymbol n} \) is the normal derivative on a chain \( C \) of smooth Jordan curves that separates \( K_1 \) from \( K_2 \) and has winding number \( 1 \)  with respect to \( K_1 \) and \( 0 \) with respect to \( K_2 \), then for both \( l\in\{1,2\} \) it holds that
\[
\sup_{z^\prime\in K_l} \limsup_{U\ni z\to z^\prime} h(z) \geq \inf_{z^\prime\in K_{3-l}} \liminf_{U\ni z\to z^\prime} h(z).
\]
We would like to apply this lemma with \( U=\D\setminus\Delta \), \( K_1=\Delta \), \( K_2=\T \), and \( h(z)=h_n(z) \). Since \( h_n(z) \) continuously extends to \( \T \) as well as both sides of \( \Delta \) where it satisfies \eqref{hnDelta}, the lemma yields that
\begin{equation}
\label{hnT1}
\max_{\tau\in\T}|h_n(\tau)|\geq 0 \geq \min_{\tau\in\T}|h_n(\tau)|,
\end{equation}
granted integral condition \eqref{o-int} is satisfied. Take \( C \) to be a circle in \( \D\setminus\Delta \) centered at the origin. Since \( h_n(z) \) is harmonic across \( \T \), it follows from Green's formula that if \eqref{o-int} is satisfied on \( \T \), then it is satisfied on \( C \) as just described. Hence, we can, in fact, take \( C=\T \). The integral of the normal derivative of \( h_n(z) \) over \( \T \) is equal to the integral of the tangential derivative of the harmonic conjugate of \( h_n(z) \), that is, it is equal to the total increment of the argument of \( H_n(z) \) along \( \T \). The increment of the argument of \( T_n(z)(m/m_n)(z)S_{\dot\mu}^2(z) \) is equal to \( 0 \) by the argument principle since this is a non-vanishing holomorphic function in \( \{|z|>\max\{|a_1|,|b_{g+1}|\}\} \). The increment of the argument of \( D_{\lambda_n}(z) \) was shown to be equal to zero in Propositions~\ref{prop:CS}. Finally, the increment of the argument of \( \psi_n(z)B_n^2(z)\varphi^{2(n-d_n)}(z) \) is zero because the first factor has exactly \( 2n \) zeroes counting multiplicities all belonging to \( \Delta^{-1} \) (thus, the increment of its argument is \( -4\pi n \)), the Blaschke product \( B_n^2(z) \) has exactly \( 2d_n \) zeroes in \( \D \) (so, the increment of its argument is \( 4\pi d_n \)), and the increment of the argument of \( \varphi(z) \) is \( 2\pi \), see Proposition~\ref{prop:varphi}. Hence, \eqref{hnT1} does indeed hold and \eqref{hnT} can be improved to
\begin{equation}
\label{hnT2}
h_n(\tau) = \log\left(\frac{\varkappa_n\gamma_n\gamma_n^*}{G_{\lambda_n}\rho^{2(n-d_n)}}\right) +o(1) = o(1).
\end{equation}

Combining the last estimate with \eqref{hnDelta}, we get from the maximum principle for harmonic functions that \( h_n(z)= o(1) \) uniformly in \( \overline\D \). Thus, the functions \( H_n(z) \) form a normal family in \( \D\setminus[a_1,b_{g+1}] \) and every limit point of this family is a unimodular constant. It readily follows from \eqref{psin}, \eqref{szego}--\eqref{cmui}, \eqref{flip-theta}--\eqref{Tn}, \eqref{varphi}, and \eqref{K}--\eqref{Gmu} (an explanation similar to the one preceding \eqref{hnT}) that \( H_n(x)>0 \) for \( x\in(b_{g+1},1) \). Therefore, the only limit point is the function \( 1 \), that is, 
\begin{equation}
\label{Hn-est1}
H_n(z)= 1 +o(1)
\end{equation}
locally uniformly in \( \D\setminus[a_1,b_{g+1}] \). It should be clear from the definition of \( H_n(z) \) that \eqref{Hn-est1} proves the theorem in \( \D\setminus[a_1,b_{g+1}] \).

Let's now consider what happens outside of the unit disk. Since \( b_n(1/z) = b_n^{-1}(z) \), it follows from \eqref{cp-err}, \eqref{asymp2}, and \eqref{prod-Psis} that
\begin{eqnarray*}
(f-r_n)(z) &=& \varkappa_n^2 \frac{(q_nR_n)(z)(q_nR_n)(1/z)}{(f-r_n)(1/z)} \\
& = & (\varkappa_n\gamma_n\gamma_n^*)^2\frac{4+o(1)}{(f-r_n)(1/z)} \frac{m_n(z)m_n(1/z)}{w(z)w(1/z)}
\end{eqnarray*}
locally uniformly in \( \overline\C\setminus(\overline\D\cup\Delta^{-1}) \). Because we have proven the theorem already for \( z\in\D\setminus\Delta \), we further get that
\begin{eqnarray*}
(f-r_n)(z) &=& (2+o(1))(\varkappa_n\gamma_n\gamma_n^*)^2 \frac{m_n(z)}{w(z)} \frac{B_n^2(1/z)}{G_{\lambda_n}D_{\lambda_n}^2(1/z)}\left(\frac{\varphi(1/z)}{\rho}\right)^{2(n-d_n)} \\
& = & (2G_{\lambda_n}+o(1))\frac{m_n(z)}{w(z)}\frac{B_n^2(1/z)}{D_{\lambda_n}^2(1/z)}\left(\rho\varphi(1/z)\right)^{2(n-d_n)}
\end{eqnarray*}
locally uniformly in \( \overline\C\setminus(\overline\D\cup\Delta^{-1}) \), where we used \eqref{hnT2} for the second equality. The desired estimate in  \( \overline\C\setminus(\overline\D\cup\Delta^{-1}) \) now follows from \eqref{varphi-sym}, \eqref{conden-sym}, and the symmetries of Blaschke products with real zeroes. The full statement of the theorem now follows from the maximum modulus principle that allows us to extend the error estimate across the unit circle.
\end{proof}

\end{document}